\documentclass[12pt]{amsart}

\usepackage{latexsym, amsmath, extarrows}
%\usepackage{times} 
%\usepackage{mathptm}
%\usepackage{showkeys}

%\renewcommand{\labelenumi}{(\arabic{enumi})}

 %nonamelm--should limit subscripting

% \newtheorem{Thm}{Theorem}[section] 
% \newtheorem{Q}[Thm]{Question}
% \newtheorem{Exa}[Thm]{Example}
% 
% \newtheorem{Lma}[Thm]{Lemma} 
% \newtheorem{Def}[Thm]{Definition}
% \newtheorem{Cor}[Thm]{Corollary} 
% \newtheorem{Rem}[Thm]{Remark}
% \newtheorem{Probl}[Thm]{Problem} 
% \newtheorem{Prop}[Thm]{Proposition}

{\theoremstyle{plain}
    \newtheorem{Thm}{\bf Theorem}[section]
    \newtheorem{Prop}[Thm]{\bf Proposition}

    \newtheorem{Cor}[Thm]{\bf Corollary}

}
{\theoremstyle{remark}
    \newtheorem{Rem}[Thm]{\bf Remark}
    \newtheorem{Exa}[Thm]{\bf Example}
}
{\theoremstyle{definition}
    \newtheorem{Def}[Thm]{\bf Definition}
}
\numberwithin{equation}{section}

\newcommand{\length}{\operatorname{\lambda}}

\newcommand{\cH}{{\mathcal H}}

\newcommand{\B}{\mathcal{B}}
\newcommand{\Z}{\mathbb{Z}}
\newcommand{\N}{\mathbb{N}}
\newcommand{\R}{\mathbb{R}}

 %\theoremheaderfont{\scshape}
\sloppy
\setlength{\topmargin}{-0.1in}
\setlength{\oddsidemargin}{0.0in}
\setlength{\evensidemargin}{0.0in}
\setlength{\textwidth}{5.7in}  % \setlength{\textwidth}{5.6in}
\setlength{\textheight}{8.2in}  % \setlength{\textheight}{8.7in}

\newcommand{\excise}[1]{}

\title{Intersection algebras for principal monomial ideals in polynomial rings}
\author[F.~Enescu, S.~Malec]{Florian Enescu and Sara Malec}

\address{Department of Mathematics and Statistics, Georgia State University, Atlanta, 30303}
\email{fenescu@gsu.edu}
\address{Department of Mathematics, University of the Pacific, 3601 Pacific Avenue, Stockton, CA 95211}
\email{smalec@pacific.edu}
\thanks{2010 {\em Mathematics Subject Classification\/}: 13A30, 05E40}
\thanks{The first author was partially supported by the NSA Young Investigator Grant H98230-12-1-0206.}
\begin{document}
\bibliographystyle{hplain}
\begin{abstract}
The properties of the intersection algebra of two principal monomial ideals in a polynomial ring are investigated in detail. Results are obtained regarding the Hilbert series and the canonical ideal of the intersection algebra using methods from the theory of diophantine linear equations with integer coefficients. 

\end{abstract}
\maketitle
\markboth{Intersection algebras for principal monomial ideals in polynomial rings}{Intersection algebras for principal monomial ideals in polynomial rings}

\section{Introduction}

The intersection algebra of two ideals $I, J$ in a commutative Noetherian ring $R$ is a natural concept which is presently poorly understood. This algebra has been considered in a few papers \cite{Ciuperca, Fieldsthesis, Fields}, but not much is known about it in general. The finite generation of this $R$-algebra has been studied in some important cases by J. B. Fields and, more recently, by the second author of this paper in \cite{Malec}. 
Fields has shown that the finite generation of the intersection algebra implies that the function $f(r,s)=\length_R({\rm Tor}_1(R/I^r, R/J^s))$ is a quasi polynomial  in the case of ideals $I, J$ in a local ring $R$ such that $I+J$ is primary to the maximal ideal of $R$.

The important cases considered by Fields and, respectively Malec, in the study of the finite generation of the intersection algebra are these of monomial ideals in polynomial rings with finitely many indeterminates over a field. If one goes beyond this situation, finite generation can fail even in regular local rings as shown by Fields. However, even in the special case of monomial ideals, no other properties of the intersection algebra have been explored. Our paper will be a first systematic attempt at understanding the intersection algebra. 

In this article, we will highlight some interesting properties of the intersection algebra in the first natural situation to consider. Our paper will study in detail the case when $R=k[x_1, \ldots, x_n]$ is a polynomial ring in finitely many variables over a field, and $I, J$ are principal monomial ideals. This is the same case that Malec has considered in \cite{Malec} and it is already complicated enough to stand on its own, as our paper will show. In addition, it offers the possibility of an algorithmic approach that could be useful in understanding the case of arbitrary monomial ideals. The reader should be aware that even the case of principal monomial ideals in $k[x]$, where $k$ a field, is nontrivial and can be used as a good illustrating example for our study. 

We will start with the definition of the intersection algebra. Throughout this paper, $R$ will be a commutative Noetherian ring. Also, bold letters will indicate $n$-tuples, hence $\mathbf{x}^{\boldsymbol \beta}=x_1^{\beta_1} \cdots x_n^{\beta_n}$.\\

\begin{Def}Let $R$ be a commutative Noetherian ring with two ideals $I$ and $J$. Then the \emph{intersection algebra of $I$ and $J$} is $\mathcal{B}=\bigoplus_{r,s \in \mathbb{N}}I^r\cap J^s$. If we introduce two indexing variables $u$ and $v$, then $\mathcal{B}_R(I,J)=\sum_{r,s \in \mathbb{N}}(I^r\cap J^s)u^rv^s\subseteq R[u,v]$. When $R,I$ and $J$ are clear from context, we will simply denote this as $\mathcal{B}$. In fact, we will also drop the parentheses from our notation, since there is no danger of confusion, so $\mathcal{B}_R(I,J)=\sum_{r,s \in \mathbb{N}}I^r\cap J^su^rv^s$. We will often think of $\mathcal{B}$ as a subring of $R[u,v]$, where there is a natural $\mathbb{N}^2$-grading on monomials $b \in \mathcal{B}$ given by $\textrm{deg}(b)=(r,s)\in \mathbb{N}^2$. If this algebra is finitely generated over $R$, we say that $I$ and $J$ have \emph{finite intersection algebra}. 
\end{Def}

\begin{Thm}[Fields~\cite{Fields}] Let $R$ be a Noetherian ring, and $I$ and $J$ monomial ideals in $A=R[x_1, \ldots, x_n]$. Then $I$ and $J$ have finite intersection algebra.
\end{Thm}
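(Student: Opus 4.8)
The plan is to reduce the statement to a purely combinatorial fact about the semigroup of exponent vectors, in the spirit of Gordan's lemma. First I would observe that it suffices to treat the case where $I=(\mathbf{x}^{\boldsymbol\alpha_1},\ldots,\mathbf{x}^{\boldsymbol\alpha_p})$ and $J=(\mathbf{x}^{\boldsymbol\beta_1},\ldots,\mathbf{x}^{\boldsymbol\beta_q})$ are monomial ideals, so that $I^r\cap J^s$ is again a monomial ideal, with a minimal generating set consisting of monomials $\mathbf{x}^{\boldsymbol\gamma}$ where $\boldsymbol\gamma$ ranges over the set of componentwise-minimal elements of
\[
  \Gamma_{r,s}=\{\boldsymbol\gamma\in\N^n : \mathbf{x}^{\boldsymbol\gamma}\in I^r\}\cap\{\boldsymbol\gamma\in\N^n : \mathbf{x}^{\boldsymbol\gamma}\in J^s\}.
\]
Each membership condition $\mathbf{x}^{\boldsymbol\gamma}\in I^r$ translates into a system of linear inequalities with integer coefficients in the variables $\boldsymbol\gamma$ and auxiliary nonnegative integer variables recording how $\mathbf{x}^{\boldsymbol\gamma}$ is written as a multiple of a degree-$r$ product of the generators of $I$ (and similarly for $J^s$); crucially, $r$ and $s$ themselves enter these inequalities linearly. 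Thus the set
\[
  \widetilde{\mathcal{C}}=\{(\boldsymbol\gamma,r,s,\text{auxiliary data})\in\N^{N} : \text{all the membership inequalities hold}\}
\]
is the set of lattice points in a rational polyhedral cone, hence a finitely generated affine semigroup by Gordan's lemma.

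Next I would push this down to $\B$ itself. The algebra $\B=\sum_{r,s}(I^r\cap J^s)u^rv^s$ is generated over $A$ by the monomials $\mathbf{x}^{\boldsymbol\gamma}u^rv^s$ with $\mathbf{x}^{\boldsymbol\gamma}\in I^r\cap J^s$, and the exponent vectors $(\boldsymbol\gamma,r,s)$ of these generators form exactly the projection onto the first $n+2$ coordinates of the semigroup $\widetilde{\mathcal{C}}$ above (projection of an affine semigroup is again a finitely generated affine semigroup). Let $(\boldsymbol\gamma_1,r_1,s_1),\ldots,(\boldsymbol\gamma_m,r_m,s_m)$ be semigroup generators. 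The key point is then: if $\mathbf{x}^{\boldsymbol\gamma}\in I^r\cap J^s$, write $(\boldsymbol\gamma,r,s)=\sum_i c_i(\boldsymbol\gamma_i,r_i,s_i)$ with $c_i\in\N$; this means $\sum_i c_i r_i=r$, $\sum_i c_i s_i=s$, and $\sum_i c_i\boldsymbol\gamma_i=\boldsymbol\gamma$ componentwise, so $\mathbf{x}^{\boldsymbol\gamma}=\prod_i(\mathbf{x}^{\boldsymbol\gamma_i})^{c_i}$ and hence $\mathbf{x}^{\boldsymbol\gamma}u^rv^s=\prod_i(\mathbf{x}^{\boldsymbol\gamma_i}u^{r_i}v^{s_i})^{c_i}$. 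Therefore $\B$ is generated as an $A$-algebra by the finitely many elements $\mathbf{x}^{\boldsymbol\gamma_i}u^{r_i}v^{s_i}$, $i=1,\ldots,m$, which gives finite generation over $A$, and since $A$ is finitely generated over $R$ (in fact $A=R[x_1,\ldots,x_n]$), $\B$ is finitely generated over $R$.

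The main obstacle is the bookkeeping in setting up the polyhedral cone correctly: one must allow the exponents $r,s$ to vary as coordinates of the cone rather than treating them as fixed parameters, and one must carefully encode "$\mathbf{x}^{\boldsymbol\gamma}\in I^r$" by inequalities that are genuinely linear and homogeneous in all the variables at once, including the auxiliary variables that witness divisibility. Concretely, the cleanest encoding uses the fact that $I^r$ is generated by $\{\mathbf{x}^{a_1\boldsymbol\alpha_1+\cdots+a_p\boldsymbol\alpha_p} : a_j\in\N,\ \sum a_j=r\}$, so $\mathbf{x}^{\boldsymbol\gamma}\in I^r$ iff there exist $a_j\in\N$ with $\sum_j a_j=r$ and $\boldsymbol\gamma\geq\sum_j a_j\boldsymbol\alpha_j$ componentwise, with the slack of each of these $n$ inequalities recorded by a further nonnegative variable; this is a homogeneous linear system over $\N$ and likewise for $J^s$. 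Once that encoding is in place, Gordan's lemma and the projection argument do the rest, and no deeper input is needed. (Note that this also reproves finiteness over any Noetherian base $R$, since nothing beyond $A=R[x_1,\ldots,x_n]$ being module-finite-free over $R$-polynomial structure is used — the combinatorics is entirely in the exponents.)
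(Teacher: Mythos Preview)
The paper does not give its own proof of this theorem; it is quoted from Fields as background, with no argument supplied. Your proposal is correct and is essentially the standard approach (and the one Fields uses): encode membership $\mathbf{x}^{\boldsymbol\gamma}\in I^r\cap J^s$ by homogeneous linear diophantine conditions in $(\boldsymbol\gamma,r,s)$ together with auxiliary witness and slack variables, recognize the solution set in $\N^N$ as the lattice points of a rational polyhedral cone, apply Gordan's lemma, and project. Indeed, the paper later carries out exactly this maneuver in the principal case via the matrix $\Phi=\Phi_{\mathbf{a},\mathbf{b}}$ and the semigroup $E_\Phi\subset\N^{3n+2}$, so your argument is fully in the spirit of the surrounding text.
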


It is should be remarked that, when $\B$ is finitely generated over $R$,  its Krull dimension is likely known to the experts. We include a proof for the benefit of the reader.

\begin{Prop}
Let $R$ be a Noetherian domain of dimension $n$ and $I, J$ two nonzero ideals in $R$. If $\B$ is finitely generated as an $R$-algebra, then it has dimension $n+2$.

\end{Prop}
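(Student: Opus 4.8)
The plan is to sandwich $\dim \B$ between $n+2$ and $n+2$ by exhibiting it as an intermediate ring between two rings whose dimensions we understand. On one hand, $\B$ sits inside $R[u,v]$, which is a Noetherian domain of dimension $n+2$; since $\B$ is a subring of a Noetherian domain of dimension $n+2$ and—being finitely generated over $R$—is itself Noetherian, we immediately get $\dim \B \le \dim R[u,v] = n+2$ provided $R[u,v]$ is integral over... no, that inclusion alone does not bound the dimension. So instead I would argue the upper bound as follows: $\B$ is a finitely generated $R$-subalgebra of $R[u,v]$, and its fraction field is a subfield of $\operatorname{Frac}(R[u,v]) = \operatorname{Frac}(R)(u,v)$, which has transcendence degree $n+2$ over the prime field (here $n = \dim R = \operatorname{trdeg}$ when $R$ is finitely generated over a field; in general one uses that $\B$ is a domain finitely generated over $R$ so $\dim \B \le \dim R + \operatorname{trdeg}_{\operatorname{Frac}(R)}\operatorname{Frac}(\B) \le n + 2$, since $u,v$ are the only new elements and they are algebraically independent over $\operatorname{Frac}(R)$ at most). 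This gives $\dim\B \le n+2$.

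For the lower bound, I would produce an explicit chain of primes of length $n+2$, or better, identify a subring over which $\B$ is integral. The key observation is that $R[I u, J v] \subseteq \B$: indeed $I u$ lies in the $(1,0)$-component $I$ and $J v$ in the $(0,1)$-component $J$, and $\B$ is a ring, so it contains the $R$-subalgebra generated by $Iu$ and $Jv$. Now pick nonzero elements $a \in I$ and $b \in J$; then $au$ and $bv$ are elements of $\B$, and I claim $\dim R[au, bv] = n+2$. Since $R$ is a domain and $a,b \ne 0$, the elements $au$ and $bv$ are algebraically independent over $\operatorname{Frac}(R)$ (they involve the independent transcendentals $u,v$), so $R[au,bv]$ is a polynomial ring in two variables over $R$, hence has dimension $n+2$. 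Finally, because $\B$ is module-finite—hence integral—over $R$? That is false in general. Instead: $\B \subseteq R[u,v]$ and $R[u,v]$ is module-finite over $R[au,bv]$ only if $a,b$ are units, which they need not be. So the lower bound requires a genuine chain argument: lift a saturated chain of primes of length $n$ in $R$ to $R[au,bv] \subseteq \B \subseteq R[u,v]$, then extend by the two "variable" directions. Concretely, take a chain $\fp_0 \subsetneq \cdots \subsetneq \fp_n$ in $R$; the primes $\fp_i R[u,v] \cap \B$ give a chain of length $n$ in $\B$, and $\fp_n R[u,v] \cap \B \subsetneq (\fp_n, u)R[u,v]\cap \B \subsetneq (\fp_n,u,v)R[u,v]\cap \B$ extends it to length $n+2$; one must check these contractions are strict, which follows since the corresponding primes in $R[u,v]$ are strict and $\B$ contains elements (e.g. $au$, $bv$, $au\cdot v$, ...) distinguishing them—this is where a short argument using the $\N^2$-grading is needed.

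The main obstacle I anticipate is the verification of strict inclusions in the lower-bound chain: the contraction of a strictly increasing chain of primes need not remain strictly increasing for an arbitrary subring, so one must use the specific structure of $\B$. The cleanest fix is probably to work with $T := R[au, bv]$, show $\operatorname{Frac}(\B) = \operatorname{Frac}(T) = \operatorname{Frac}(R)(u,v)$ (because $u = (au)/a$ and $v = (bv)/b$ lie in $\operatorname{Frac}(T)$), observe that $\B$ and $T$ have the same fraction field of transcendence degree $n+2$ over the prime field, and then invoke the dimension formula for finitely generated algebras over a Noetherian domain: for a domain $\B$ finitely generated over a domain $R$, one has $\dim \B = \dim R + \operatorname{trdeg}_{\operatorname{Frac}(R)} \operatorname{Frac}(\B)$ provided $R$ is, e.g., finitely generated over a field or more generally universally catenary — and here $\operatorname{trdeg} = 2$. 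If the paper's standing hypotheses do not guarantee catenarity of $R$, I would instead restrict to exhibiting the explicit length-$(n+2)$ chain above together with the transcendence-degree upper bound, which suffices and avoids any catenary assumption.
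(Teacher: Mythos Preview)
Your upper bound is essentially the paper's: both invoke the dimension inequality to get $\textrm{ht}\,Q \le \textrm{ht}(Q\cap R) + \textrm{tr.deg}_R \B \le n+2$, using that $\textrm{Frac}(\B)=\textrm{Frac}(R)(u,v)$ since $au,bv\in\B$ for nonzero $a\in I$, $b\in J$.

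Your lower bound, however, has a genuine gap. The chain obtained by contracting $\fp_n R[u,v]\subsetneq(\fp_n,u)R[u,v]\subsetneq(\fp_n,u,v)R[u,v]$ to $\B$ collapses whenever $I,J\subseteq\fp_n$, and this is unavoidable if, for instance, $R$ is local with $I,J$ proper. Indeed, for $f=\sum c_{r,s}u^rv^s\in\B$ one has $c_{r,s}\in I^r\cap J^s\subseteq\fp_n$ for every $(r,s)\ne(0,0)$, so membership of $f$ in any of the three primes reduces to the single condition $c_{0,0}\in\fp_n$; all three contract to the \emph{same} prime of $\B$. The witnesses $au,bv,au\cdot v$ you suggest already lie in $\fp_n R[u,v]$ and distinguish nothing. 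Your fallback via the dimension formula requires universal catenarity, which is not among the hypotheses.

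The paper repairs this by placing the two extra primes \emph{below} rather than above. It sets $\B_+=\{f\in\B:f\text{ has zero constant term}\}$, so that $\B/\B_+\cong R$, and shows $\textrm{ht}\,\B_+\ge2$ by localizing at $\B_+$: in $\B_{\B_+}$ one has $u=(au)/a$ and $v=(bv)/b$ as honest elements generating the maximal ideal, and a short argument rules out dimension $1$ (else $v^N\in(u)$ for some $N$, which is impossible). A length-$n$ chain in $R$ then lifts to a chain of primes above $\B_+$, giving total length $n+2$. Equivalently, your contraction idea does work if you run it from the bottom: the chain $0\subsetneq(u)R[u,v]\cap\B\subsetneq(u,v)R[u,v]\cap\B=\B_+$ is strict, witnessed by $au$ and $bv$ respectively, \emph{independently of any prime of $R$}; one then extends upward through preimages of a maximal chain in $R\cong\B/\B_+$.
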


\begin{proof} Let $Q$ be a prime ideal in $\B$ and let $P=Q \cap R$ be its restriction to $R$. Then the dimension inequality \cite{Huneke} says that  
\[\textrm{ht}Q+\textrm{tr.deg}_{\kappa(P)}\kappa(Q) \leq \textrm{ht}P + \textrm{tr.deg}_R\B,\]
where $\kappa(P)$ and $\kappa(Q)$ denote the field of fractions of $R/P$ and $R/Q$, respectively. Then since $\B$ is a domain, both $\frac{u}{1}, \frac{v}{1} \neq 0$ in the fraction field of $\B$, so $\{u,v\}$ form a transcendence basis for $\B$ over $R$. Thus $\textrm{tr.deg}_R\B=2$, and since $\dim R=n$, $\textrm{ht} P \leq n$. So $\textrm{ht}Q \leq n+2$, and thus $\dim \B \leq n+2$.

 Define the following ideal
\[\B_+=\{b \in \B \subset R[u,v]|b \textrm{ has no constant term}\},\]
and consider the localization $\B_{\B_+}$. Note that $u,v \in \B_{\B_+}$, since $u=\frac{Iu}{I}$ and $I \notin \B_+$, and $(u,v)\B_{\B_+}=\B_+\B_{\B_+}=\mathfrak{m}$, the maximal ideal in $\B_{\B_+}$. Since $\B$ is a domain, $\B_{\B_+}$ is too. We claim $\dim \B_{\B_+}\geq2$.\\

Assume that $\dim \B_{\B_+} =1$. So since $0 \neq u \in (u,v)\B_{\B_+}$, ht$(u)=1$. So ht$P=1$ for every $P \in \textrm{Min}(\B_{\B_+}/(u)\B_{\B_+})$, and in fact every prime ideal in $\B_{\B_+}$ has height 1, since $\B_{\B_+}$ is local of dimension 1.\\
So we have a chain
\[0 \subset (u) \subset P = \mathfrak{m},\] and therefore $\sqrt{(u)}=\mathfrak{m}$. So there exists an $n \in \N$ such that $\mathfrak{m}^n \subset (u)$. But $v \in \mathfrak{m}$, so $v^n \in (u)\B_{\B_+}$, which implies that there exists a $b \in \B$,$z \notin \B_+$ such that $v^n=u \frac{b}{z}$. But then $zv^n =ub$, and $u$ does not divide $v^n$, so $u$ must divide $z$. But this is false, because $z$ has a nonzero constant term $z' \in R$, and $u$ can not divide $z'$. \\
So $\dim \B_{\B_+} \geq n+ 2$ and therefore ht $\B_+ \geq n+ 2$. 

Since $\B/\B_+ \cong R$, any chain of primes in $R$ can be extended by 2 primes to a chain in $\B$, and $\dim \B \geq n+2$. \end{proof}

\begin{Def}A \emph{polyhedral cone} $C$ in $\mathbb{R}^d$ is the intersection of finitely many closed linear half-spaces in $\mathbb{R}^d$, each of whose bounding hyperplanes contains the origin. A hyperplane $H$ containing the origin is called a \emph{supporting hyperplane} if $H \cap C \neq 0$ and $C$ is contained in one of the closed half-spaces determined by $H$. If $H$ is a supporting hyperplane of $C$, then $H \cap C$ is called a \emph{face} of $C$.
 Every polyhedral cone $C$ is finitely generated, i.e. there exist $\boldsymbol{c_1}, \ldots, \boldsymbol{c_r} \in \mathbb{R}^d$ with \[C=\{\lambda_1\boldsymbol{c_1}+\cdots+\lambda_r\boldsymbol{c_r}|\lambda_1, \ldots, \lambda_r \in \mathbb{R}_{\geq 0}\}.\] We call the cone $C$ \emph{rational} if $\boldsymbol{c_1}, \ldots, \boldsymbol{c_r}$ can be chosen to have rational coordinates, and $C$ is \emph{pointed} if $C \cap(-C)=\{\boldsymbol{0}\}$.\end{Def}

A special kind of collection of cones are called fans.
\begin{Def}A \emph{fan} is a collection $\Sigma$ of cones $\{C_i\}_{i \in I}$, where $I$ is a finite set, the faces of each $C_i \in \Sigma$ are also in $\Sigma$, and the intersection of every pair of cones in $\Sigma$ is a common face of both of them, if nonempty.
\end{Def}

One major and well known property of these cones that we will use is that they are finitely generated.
 
\begin{Thm} \textup{(Proposition 7.15 in \cite{Miller})} Any pointed affine semigroup $Q$ has a unique finite minimal generating set $\mathcal{H}_Q$.\end{Thm}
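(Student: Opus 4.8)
The plan is to identify $\mathcal{H}_Q$ explicitly as the set of \emph{irreducible} elements of $Q$, where $q \in Q \setminus \{0\}$ is called irreducible if it admits no expression $q = q' + q''$ with $q', q'' \in Q \setminus \{0\}$, and then to prove three things: $\mathcal{H}_Q$ generates $Q$, it is finite, and it is contained in every generating set of $Q$. The last two points together force $\mathcal{H}_Q$ to be \emph{the} unique minimal generating set. Throughout I take ``pointed'' to mean $Q \cap (-Q) = \{0\}$, i.e. $0$ is the only unit of $Q$.

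The key preliminary step, and the one I expect to be the main obstacle, is to manufacture a $\mathbb{Z}$-valued grading out of pointedness: a group homomorphism $\phi \colon \mathbb{Z}^d \to \mathbb{Z}$ with $\phi(q) \geq 1$ for all $q \in Q \setminus \{0\}$. For this I would first argue that $Q \cap (-Q) = \{0\}$ forces the rational cone $C = \mathbb{R}_{\geq 0} Q$ to be pointed in the sense of the Definition above: if $C$ contained a line, its lineality space $C \cap (-C)$ would be a nonzero rational subspace, hence would contain a nonzero lattice vector $w$; writing $w$ and $-w$ as nonnegative rational combinations of a finite generating set of $Q$ and clearing denominators would yield a nonzero integer multiple of $w$ lying in $Q \cap (-Q)$, a contradiction. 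A pointed rational cone has a supporting hyperplane meeting it only at $\{0\}$, cut out by a linear functional that may be chosen with integer coefficients; since such a functional is strictly positive on $C \setminus \{0\} \supseteq Q \setminus \{0\}$ and takes integer values on $\mathbb{Z}^d$, it serves as the desired $\phi$.

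Granting $\phi$, the remaining steps are short. That $\mathcal{H}_Q$ generates $Q$ follows by induction on $\phi(q)$: the element $0$ is the empty sum, an irreducible $q$ already lies in $\mathcal{H}_Q$, and otherwise $q = q' + q''$ with $q', q'' \in Q \setminus \{0\}$, so $\phi(q') + \phi(q'') = \phi(q)$ with both summands at least $1$, placing each of $q', q''$ at strictly smaller grading, hence a sum of irreducibles by induction, and therefore so is $q$. Finiteness and minimality both come from a single observation: if $S$ is any generating set of $Q$ and $q \in \mathcal{H}_Q$, then writing $q = \sum_{s \in S} c_s s$ with $c_s \in \mathbb{N}$ not all zero and choosing some $s_0$ with $c_{s_0} > 0$ gives $q = s_0 + (q - s_0)$ with $q - s_0 \in Q$; irreducibility of $q$ forces $q - s_0 = 0$, so $q = s_0 \in S$. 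Hence $\mathcal{H}_Q \subseteq S$ for every generating set $S$. Taking $S$ to be a finite generating set of the affine semigroup $Q$ shows $\mathcal{H}_Q$ is finite, and since $\mathcal{H}_Q$ is itself a generating set contained in every other, it is the unique minimal one.
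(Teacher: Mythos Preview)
The paper does not give a proof of this statement: it is quoted verbatim as Proposition~7.15 of \cite{Miller} and used as background. So there is no in-paper argument to compare against.

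Your proof is correct and is essentially the standard argument (the one in Miller--Sturmfels). Two small remarks. First, in the lineality-space step, the claim that a lattice vector $w$ in the rational cone $C=\mathbb{R}_{\ge 0}Q$ is a nonnegative \emph{rational} combination of the integral generators deserves a word of justification: apply Carath\'eodory to write $w$ as a nonnegative combination of a linearly independent subset of generators, and then the coefficients are forced to be rational by Cramer's rule. Second, in the minimality step you should pick $s_0\in S$ with $c_{s_0}>0$ \emph{and} $s_0\neq 0$; this is possible because $q\neq 0$ (apply $\phi$ to $q=\sum c_s s$ to see some nonzero $s$ must occur). With these cosmetic fixes the argument is complete.
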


 \begin{Def} \textup{(Definition 7.17 in \cite{Miller})} Let $C$ be a rational pointed cone in $\mathbb{R}^d$, and let $Q=C \cap \mathbb{Z}^d$. Then the unique finite minimal generating set $\mathcal{H}_Q$ is called the \emph{Hilbert Basis of the cone $C$}.\end{Def}
  
For any two strings of numbers
\[\mathbf a=(a_1, \ldots, a_n), \mathbf b=(b_1, \ldots, b_n)\textrm{ with }a_i,b_i \in \mathbb{N},\]
we can associate to them a fan of pointed, rational cones in $\mathbb{N}^2$.  Note that in this paper $\mathbb{N}$ denotes the set of nonnegative integers.

\begin{Def} 
\label{fan}We will call two such strings of numbers $(\mathbf{a,b})$ \emph{fan ordered} if 
\[\frac{a_i}{b_i} \geq \frac{a_{i+1}}{b_{i+1}}\textrm{ for all } i=1, \ldots, n-1.\] 

By convention, if $b_i=0$, we will say that $\frac{a_i}{b_i}=\infty$.
Assume $(\mathbf{a}, \mathbf{b})$ are fan ordered. Additionally, let $a_{n+1}=b_0=0$ and $a_0=b_{n+1}=1$. Then for all $i=0, \ldots, n$, let \[C_i=\{\lambda_1 (b_i,a_i )+\lambda_2(b_{i+1},a_{i+1})|\lambda_i \in \mathbb{R}_{\geq 0}\}.\] Let $\Sigma_{\mathbf{a}, \mathbf{b}}$ be the fan formed by these cones and their faces, and call it the \emph{fan of $\mathbf{a}$ and $\mathbf{b}$ in $\N^2$}. Hence \[\Sigma_{\mathbf{a},\mathbf{b}}=\{C_i|i=0, \ldots, n\}.\]

Moreover, $(\mathbf{a},\mathbf{b})$ is called \emph{non-degenerate} if \[\frac{a_i}{b_i} > \frac{a_{i+1}}{b_{i+1}} \textrm{ for all } i=0, \ldots, n.\] 

\end{Def}

Then, since each $C_i$ is a pointed rational cone, $Q_i=C_i \cap \mathbb{Z}^2$ has a Hilbert Basis, say 
\[\cH_i= \mathcal{H}_{Q_i}=\{(r_{i1},s_{i1}), \ldots, (r_{in_i},s_{in_i})\}.\]

Note that any $\Sigma_{\mathbf{a}, \mathbf{b}}$ partitions all of the first quadrant of $\R^2$ into cones, so the collection $\{Q_i|i=0, \ldots, n\}$ partitions all of $\N^2$ as well, so for any $(r,s) \in \N^2$, $(r,s) \in Q_i$ for some $i=0, \ldots, n$.\\

Let $R$ be a UFD with principal ideals $I=(p_1^{a_1}\cdots p_n^{a_n})$ and  $J=(p_1^{b_1}\cdots p_n^{b_n})$, where $p_i, i=1, \ldots, n$ are irreducible elements. For the purposes of this paper, we will assume without loss of generality that the exponent vectors $\mathbf{a}=(a_1, \ldots, a_n)$ and $\mathbf{b}=(b_1, \ldots, b_n)$ are fan ordered. This is because one can index the indeterminates so that the ratios of the entries in $\mathbf{a}, \mathbf{b}$ are non-increasing. This assumption together with the notations introduced in Definition~\ref{fan} will be used throughout the paper.\\

When $R$ is a UFD and $I$ and $J$ are principal ideals, then the intersection $I^r \cap J^s$ can be computed in terms of the irreducible decomposition of the generators of $I$ and $J$. This observation is behind the main result on the finite generation of the intersection algebra proved in~\cite{Malec} and stated below.

 \begin{Thm} [Malec] \label{UFD} Let $R$ be a UFD with principal ideals $I=(p_1^{a_1}\cdots p_n^{a_n})$ and  $J=(p_1^{b_1}\cdots p_n^{b_n})$, where $p_i, i=1, \ldots, n$ are irreducible elements, and let $\Sigma_{\mathbf{a},\mathbf{b}}$ be the fan associated to $\mathbf{a}=(a_1, \ldots, a_n)$ and $\mathbf{b}=(b_1, \ldots, b_n)$. Then $\mathcal{B}$ is generated over $R$ by the set \[\{p_1^{a_1r_{ij}}\cdots p_i^{a_ir_{ij}}p_{i+1}^{b_{i+1}s_{ij}} \cdots p_n^{b_ns_{ij}}u^{r_{ij}}v^{s_{ij}}| i=0, \ldots, n, j=1, \ldots, n_i\},\] where $(r_{ij},s_{ij})$ run over the Hilbert basis for each $Q_i=C_i \cap \Z^2$ for every $C_i \in \Sigma_{\mathbf{a},\mathbf{b}}$.
 \end{Thm}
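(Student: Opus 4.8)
The plan is to reduce the statement to a standard fact about affine semigroups, after making the $R$-module $\B$ completely explicit. Since $R$ is a UFD and $I,J$ are principal, for any $r,s\in\N$ the ideal $I^r\cap J^s$ is principal, generated by the least common multiple of $p_1^{a_1r}\cdots p_n^{a_nr}$ and $p_1^{b_1s}\cdots p_n^{b_ns}$, which is the monomial $\mu(r,s):=\prod_{\ell=1}^n p_\ell^{\max(a_\ell r,\,b_\ell s)}$. Hence, as an $R$-module, $\B$ is free with basis $\{\mu(r,s)\,u^rv^s : (r,s)\in\N^2\}$, because $(I^r\cap J^s)u^rv^s=R\cdot\mu(r,s)u^rv^s$. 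So it suffices to prove that each $\mu(r,s)u^rv^s$ is a monomial in the elements of the listed set and that, conversely, each listed element lies in $\B$.

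The next step is to evaluate $\mu$ on the cones of $\Sigma_{\mathbf a,\mathbf b}$. From fan-orderedness, extended by the conventions $a_0=b_{n+1}=1$ and $a_{n+1}=b_0=0$, one gets the cross-product inequalities $a_\ell b_m\ge a_m b_\ell$ for all $0\le\ell\le m\le n+1$ (dividing through by $b_\ell b_m$ when both are nonzero, and reading off the inequality directly otherwise). Now fix $i$ and $(r,s)\in Q_i$, so $(r,s)=\lambda_1(b_i,a_i)+\lambda_2(b_{i+1},a_{i+1})$ with $\lambda_1,\lambda_2\ge 0$. Substituting and regrouping,
\[
a_\ell r-b_\ell s=\lambda_1\bigl(a_\ell b_i-a_ib_\ell\bigr)+\lambda_2\bigl(a_\ell b_{i+1}-a_{i+1}b_\ell\bigr),
\]
which is $\ge 0$ when $\ell\le i$ and $\le 0$ when $\ell\ge i+1$, by the cross-product inequalities. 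Therefore
\[
\mu(r,s)=p_1^{a_1r}\cdots p_i^{a_ir}\;p_{i+1}^{b_{i+1}s}\cdots p_n^{b_ns}\qquad\text{for all }(r,s)\in Q_i .
\]
In particular the element of the listed set attached to $(r_{ij},s_{ij})\in Q_i$ is exactly $\mu(r_{ij},s_{ij})u^{r_{ij}}v^{s_{ij}}\in(I^{r_{ij}}\cap J^{s_{ij}})u^{r_{ij}}v^{s_{ij}}\subseteq\B$. The key consequence is that for fixed $i$ the map $\phi_i\colon(Q_i,+)\to(\B,\cdot)$, $(c,d)\mapsto\mu(c,d)u^cv^d$, is a monoid homomorphism: the exponent of each $p_\ell$ in the displayed formula is linear in $(c,d)$, so $\mu(c,d)\mu(c',d')=\mu(c+c',d+d')$ whenever $(c,d),(c',d')\in Q_i$, and $Q_i$ is closed under addition.

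Finally I would assemble these. Given $(r,s)\in\N^2$, the fan $\Sigma_{\mathbf a,\mathbf b}$ partitions the first quadrant, so $(r,s)\in Q_i$ for some $i$; writing $(r,s)$ as a nonnegative integer combination of the Hilbert basis $\cH_i$ (which exists and generates $Q_i$ by Proposition~7.15 of \cite{Miller}) and applying $\phi_i$, one obtains $\mu(r,s)u^rv^s$ as the corresponding monomial in the elements $\mu(r_{ij},s_{ij})u^{r_{ij}}v^{s_{ij}}$. Since these elements lie in $\B$ and the $Q_i$ cover $\N^2$, the listed set is contained in $\B$ and generates it as an $R$-algebra.

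The only step with real content is the middle one — the reduction of $\max(a_\ell r,b_\ell s)$ to a single term on each $Q_i$. The computation itself is short, but one must be careful with the degenerate indices: the boundary cones $C_0$ and $C_n$, where the product over $\ell\le i$ or over $\ell\ge i+1$ is empty (this matches the shape of the listed generators for $i=0$ and $i=n$), and coordinates $\ell$ with $a_\ell=0$ or $b_\ell=0$, where the cross-product inequalities must be verified without dividing. Everything else is the routine UFD computation of $I^r\cap J^s$ together with the cited fact that a Hilbert basis generates its affine semigroup.
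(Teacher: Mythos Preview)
The paper does not actually prove this theorem: it is stated with attribution to Malec and cited from~\cite{Malec}, with only the remark that ``the intersection $I^r\cap J^s$ can be computed in terms of the irreducible decomposition of the generators of $I$ and $J$'' as the idea behind it. So there is no in-paper proof to compare against.

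Your argument is correct and matches the sketch the paper gives. The UFD computation $I^r\cap J^s=\bigl(\prod_\ell p_\ell^{\max(a_\ell r,\,b_\ell s)}\bigr)$ is exactly the observation the paper alludes to; your cross-product calculation showing that $\max(a_\ell r,b_\ell s)$ collapses to $a_\ell r$ for $\ell\le i$ and to $b_\ell s$ for $\ell\ge i+1$ on each $Q_i$ is the same linearization that the paper uses repeatedly later (e.g.\ in the minimality proof following Theorem~\ref{UFD} and in Theorem~\ref{min}). Once $\mu$ is linear on each $Q_i$, the reduction to Hilbert bases is immediate, as you say. The degenerate cases you flag (empty products at $i=0,n$; coordinates with $a_\ell=0$ or $b_\ell=0$) are handled correctly by your cross-product formulation, since it avoids division.
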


\begin{Exa}\label{Ex1}
Let $I=(x^5)$ and $J=(x^2)$ in $R[x]$. Then $a_1=5$ and $b_1=2$. Additionally, we let $a_0=b_2=1$ and $a_2=b_0=0$. Then we have that $a_0/b_0>a_1/b_1>a_2/b_2$, 
and we form two cones in $\mathbb{R}^2$:
\[C_0=\{\lambda_1 (0,1 )+\lambda_2(2,5)|\lambda_i \in \mathbb{R}_{\geq 0}\} \textrm{ and } C_1=\{\lambda_1 (2,5)+\lambda_2(1,0)|\lambda_i \in \mathbb{R}_{\geq 0}\}
.\]
Intersecting both cones with $\mathbb{N}^2$ gives two semigroups, $Q_0$ and $Q_1$. Using Macaulay2, we obtain two Hilbert bases
\[\mathcal{H}_{0}=\{(0,1),(1,3),(2,5)\} \textrm{ and } \mathcal{H}_{1}=\{(1,0),(1,1),(1,2),(2,5)\}.\]
Each Hilbert basis gives rise to a set of generators as follows.\\

From $Q_0$:\\

$\begin{aligned}
(0,1): &\left(I^0 \cap J^1\right) u^0v^1=x^2v\\
(1,3): &\left(I^1 \cap J^3\right) u^1v^3=x^6uv^3\\
(2,5): & \left(I^2 \cap J^5 \right)u^2v^5=x^{10}u^2v^5\\
\end{aligned}$\\

From $Q_1$:\\

$\begin{aligned}
(1,0): &\left(I^1 \cap J^0\right) u^1v^0=x^5u\\
(1,1): & \left(I^1 \cap J^1 \right) u^1v^1=x^5uv\\
(1,2): & \left(I^1 \cap J^2 \right)u^1v^2=x^5uv^2\\
(2,5): & \left(I^2 \cap J^5 \right)u^2v^5=x^{10}u^2v^5\\
\end{aligned}$\\

Notice that each generator has the form given in the above theorem: given an $(r,s)$ in $H_0$, the generator is of the form $x^{b_1s}u^rv^s$, and given $(r,s)$ in $H_1$, the generator is of the form $x^{a_1r}u^rv^s$.

So $\mathcal{B}$ is generated over $R$ by $\{x^2v, x^6uv^3, x^{10}u^2v^5, x^5u, x^5uv, x^5uv^2 \}$.
\end{Exa}

\begin{Thm}Using the notations above, the generating set in Theorem~\ref{UFD} is minimal, in that no generator is a product of the others.\end{Thm}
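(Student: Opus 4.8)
The plan is to use the $\N^2$-grading of $\B$. For principal monomial ideals $I=(\prod_t p_t^{a_t})$ and $J=(\prod_t p_t^{b_t})$ in the UFD $R$ one has $I^r\cap J^s=\big(\prod_t p_t^{\max(a_tr,\,b_ts)}\big)$, so each graded piece $\B_{(r,s)}$ is a free $R$-module of rank one on the monomial $\hat m_{r,s}=\big(\prod_t p_t^{\max(a_tr,\,b_ts)}\big)u^rv^s$; and when $(r,s)\in C_i$ the fan ordering forces $\max(a_tr,b_ts)=a_tr$ for $t\le i$ and $=b_ts$ for $t\ge i+1$, which is exactly the shape of the generators in Theorem~\ref{UFD}. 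So what must be shown is that no generator $g=g_{ij}$, of bidegree $(r_{ij},s_{ij})\in\mathcal{H}_i$, can be written as $g=c\cdot\prod_{(k,l)\neq (i,j)}g_{kl}^{\,e_{kl}}$ with $c\in R$ and $\sum e_{kl}\ge1$. (I index the variables by $t$ here to avoid collision with the cone index $i$.)

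Suppose such a relation existed. Comparing bidegrees gives $\sum e_{kl}(r_{kl},s_{kl})=(r_{ij},s_{ij})$; comparing the exponent of each irreducible $p_t$ on the two sides — using unique factorization in $R$, which also shows no irreducible outside $p_1,\dots,p_n$ can divide $c$ — gives $\nu_t\le\mu_t$ for every $t$, where $\mu_t$ is the $p_t$-exponent of $g$ and $\nu_t=\sum_{(k,l)}e_{kl}\,\varepsilon_t(g_{kl})$ with $\varepsilon_t(g_{kl})=a_tr_{kl}$ if $t\le k$ and $=b_ts_{kl}$ if $t>k$. The first substantial step is to reverse this inequality by invoking the fan ordering: if $t\le i$ and $g_{kl}$ is a factor with $k<t$, then $(r_{kl},s_{kl})\in C_k$ has slope at least $a_{k+1}/b_{k+1}\ge a_t/b_t$, so $b_ts_{kl}\ge a_tr_{kl}$, and summing yields $\nu_t\ge a_t\sum e_{kl}r_{kl}=a_tr_{ij}=\mu_t$; the mirror-image estimate handles $t>i$. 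Hence $\nu_t=\mu_t$ for all $t$ (so $c$ is in fact a unit), and equality in the estimates forces $b_ts_{kl}=a_tr_{kl}$ for every factor with $k<t\le i$ and $a_tr_{kl}=b_ts_{kl}$ for every factor with $k\ge t>i$.

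Specializing to $t=i$ (when $i\ge1$) and to $t=i+1$ (when $i\le n-1$), these equalities say that each factor $g_{kl}$ with $k\neq i$ has $(r_{kl},s_{kl})$ lying on one of the two bounding rays of $C_i$, namely $\R_{\ge 0}(b_i,a_i)$ or $\R_{\ge 0}(b_{i+1},a_{i+1})$. A short argument with the fan then shows that a nonzero lattice point lying on such a ray and belonging to some $C_k$ must be the primitive lattice vector of that ray, hence a member of $\mathcal{H}_i$; and since a lattice point on a ray shared by two adjacent cones yields the same monomial computed from either cone, the factor is literally one of the $C_i$-generators, and it is not $g$ itself (it was excluded from the product). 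Consequently every factor $(r_{kl},s_{kl})$ lies in $\mathcal{H}_i\setminus\{(r_{ij},s_{ij})\}$, and the bidegree identity $\sum e_{kl}(r_{kl},s_{kl})=(r_{ij},s_{ij})$ exhibits the Hilbert basis element $(r_{ij},s_{ij})$ of $Q_i$ as a nontrivial nonnegative integer combination of other Hilbert basis elements of $Q_i$ — contradicting uniqueness of the minimal generating set of the affine semigroup $Q_i$ (the theorem of~\cite{Miller} quoted above). This contradiction finishes the proof.

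The two exponent estimates and the final appeal to the Hilbert basis are routine. I expect the main obstacle to be the third paragraph: cleanly ruling out factors coming from cones $C_k$ with $k\notin\{i-1,i,i+1\}$, handling the boundary cases $i=0$ and $i=n$ (where only one of the two bounding-ray constraints is available), and dealing with the degenerate situation where consecutive ratios coincide so that some $C_i$ collapses to a ray — in that last case one must check that the shared-ray identification forces the offending factor to coincide with $g$ itself, so that the relation remains impossible.
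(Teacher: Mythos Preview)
Your approach is correct and, in fact, slightly cleaner than the paper's. The paper proceeds by partitioning the factors according to which $\mathcal H_k$ they come from, summing within each group to obtain $(r_k,s_k)$, and then writing out the full system of $n$ equations $\max(a_tr,b_ts)=\sum_k\max(a_tr_k,b_ts_k)$; it then solves these equations sequentially from both ends, peeling off $(r_n,s_n)=0$, $(r_{n-1},s_{n-1})=0$, \dots, and similarly from the bottom, until only one group survives and the Hilbert-basis contradiction appears. Your argument replaces this partition-and-solve step with the single observation $\varepsilon_t(g_{kl})=\max(a_tr_{kl},b_ts_{kl})\ge a_tr_{kl}$ (resp.\ $\ge b_ts_{kl}$), which yields $\nu_t\ge\mu_t$ in one line; combined with $\nu_t\le\mu_t$ (from the $R$-coefficient $c$), equality drops out immediately and forces every foreign factor onto a bounding ray of $C_i$. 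The paper never introduces the coefficient $c$ and assumes exact equality of exponents from the start; your sandwich argument disposes of $c$ for free and makes the minimality statement genuinely one about $R$-algebra generators.

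Your worries in the last paragraph are not serious. The boundary cases $i=0$ and $i=n$ are vacuous (there simply are no factors with $k<0$ or $k>n$). For factors with $k<i$ lying on the ray $\mathbb R_{\ge0}(b_i,a_i)$: membership in $C_k$ forces $a_{k+1}/b_{k+1}=a_i/b_i$, so the ray is the lower edge of $C_k$ and the primitive vector $(q_i,p_i)$ lies in $Q_k$; hence the Hilbert-basis element $(r_{kl},s_{kl})$ on that ray must equal $(q_i,p_i)\in\mathcal H_i$. In the degenerate case $a_i/b_i=a_{i+1}/b_{i+1}$ your argument still goes through: every factor is then forced to equal $(q_i,p_i)=(r_{ij},s_{ij})$, i.e.\ to be $g$ itself, which was excluded. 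So the plan as written needs no new ideas to close.
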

\begin{proof} 
First an easier case: Say $I=(p^a)$ and $J=(p^b)$, where $p$ is an irreducible in $R$, and say that one generator $p^{\max(ar,bs)}u^rv^s$ is a product of the others, in other words,
\begin{equation}\label{decomp}p^{\max(ar,bs)}u^rv^s=\prod_i\left(p^{\max(ar'_i,bs'_i)}u^{r'_i}v^{s'_i}\right)^{c_i}\end{equation}
where $(r'_i,s'_i)$ are elements of $\mathcal{H}_0 \cup \mathcal{H}_1$ and $c_i \in \N$.

We collect all the $(r'_i,s'_i)$ from $Q_0$ into one pair $(r_0,s_0)$, and those from $Q_1$ into another, $(r_1,s_1)$. However, since both $Q_0$ and $Q_1$ contain any points on the face separating them, we must make this partition well defined. Define the two sets
\[\Lambda_0=\{i|(r'_i,s'_i) \in \cH_0\} \textrm{ and } \Lambda_1=\{i|(r'_i,s'_i) \in \cH_1\setminus \cH_0\},\]

and define
\[(r_0,s_0)=\sum_{i \in \Lambda_0}c_i(r'_i,s'_i) \textrm{ and } (r_1,s_1)=\sum_{i \in \Lambda_1}c_i(r'_i,s'_i).\]
In other words, $(r_0,s_0)$ is the sum of all Hilbert basis elements present in the decomposition of $(r,s)$ (including coefficients) that come from the cone $C_0$, and $(r_1,s_1)$ is the sum of all Hilbert basis elements in the decomposition of $(r,s)$ (including coefficients) that come from $C_1$, not including the face between $C_1$ and $C_0$. Note that if the decomposition of $(r,s)$ doesn't contain an element from the cone $C_k$, set $(r_k,s_k)=(0,0)$.
Then by \ref{decomp},
\begin{equation}\label{r}(r,s)=(r_0,s_0)+(r_1,s_1)\end{equation}
and
\begin{equation}\label{max}\max(ar,bs)=\max(ar_0,bs_0)+\max(ar_1,bs_1).\end{equation}

Since $(r_0,s_0) \in Q_0$, $\max(ar_0,bs_0)=bs_0$. Similarly, $\max(ar_1,bs_1)=ar_1$.

Assume $(r,s) \in Q_0$. Then $\max(ar,bs)=bs$, and by \ref{max} and \ref{r},
\[bs=bs_0+ar_1=bs_0+bs_1.\]
Therefore, $ar_1=bs_1$, so $a/b=s_1/r_1$, i.e. $(r_1,s_1)$ lies on the face between $Q_0$ and $Q_1$, which contradicts the definition of $(r_1,s_1)$.

So $(r,s) \in Q_1$ and $\max(ar,bs)=ar$. By \ref{max} and \ref{r},
\[ar=bs_0+ar_1=ar_0+ar_1.\]
So $bs_0=ar_0$, and thus $(r_0,s_0)$ lies on the face between $Q_0$ and $Q_1$. But this means $(r_0,s_0) \in Q_1$, and also $(r_1,s_1)$ and $(r,s) \in Q_1$. But $(r,s)$ is a Hilbert basis element of $Q_1$, and $(r,s)=(r_0,s_0)+(r_1,s_1)$, which is a contradiction. \\

The proof of the general case is similar. Let $p_1, \ldots, p_n$ be irreducibles in $R$, and $I=(p_1^{a_1} \ldots p_n^{a_n}), J=(p_1^{b_1} \ldots p_n^{b_n})$. Say that one generator is a product of the others, i.e.
\begin{equation}\label{bigdecomp}\prod_{i=1}^m(p_1^{\max(a_1r'_i,b_1s'_i)}\cdots p_n^{\max(a_nr'_i,b_ns'_i)})^{c_i}=p_1^{\max(a_1r,b_1s)}\cdots p_n^{\max(a_nr,b_ns)},\end{equation}
where $(r'_i,s'_i)$, $i=1, \ldots, m$ and $(r,s)$ are Hilbert basis elements, and $c_i \in \N$. 

We claim we can assume that $b_i \neq 0$ for all $i=1, \ldots, n$. To see this, note that if there exists an $h$ such that $b_h=0$, then $b_1, \ldots, b_{h-1}=0$ by the fan-ordering. Also, if $b_1=0$, then $a_1 \neq 0$ (otherwise, $p_1$ would simply not be in the decompositions of the generators of $I$ and $J$). So
\begin{equation*}\begin{aligned}a_1r=\sum_{i=1}^mc_ia_1r_i' \textrm{ implies } r=\sum_{i=1}^mc_ir_i'.
\end{aligned}\end{equation*}

So by canceling in \ref{bigdecomp},
\begin{equation*}\prod_{i=2}^m(p_2^{\max(a_2r'_i,b_1s'_i)}\cdots p_n^{\max(a_nr'_i,b_ns'_i)})^{c_i}=p_2^{\max(a_2r,b_2s)}\cdots p_n^{\max(a_nr,b_ns)}.\end{equation*}

If $b_2=0$, we continue in the same way with canceling the $p_2$ terms in \ref{bigdecomp}, until the first nonzero $b$, say $b_h$. Then we have \ref{bigdecomp} with only terms $p_h, \ldots, p_n$, and $b_h, \ldots, b_n$ are all nonzero.

Now assume all $b_i \neq 0$.

We again partition all the $(r'_i,s'_i)$, and sum them into $n+1$ pairs $(r_i,s_i), i=0, \ldots, n$. To make this partition well defined, define the $n+1$ sets
\[ \Lambda_0=\{i|(r'_i,s'_i) \in \cH_0\} \textrm{ and } \Lambda_k=\{i|(r'_i,s'_i) \in \cH_k \setminus \cH_{k-1}\} \textrm{ for all }k=1, \ldots, n\}.\]

Then define
\[(r_k,s_k)=\sum_{i \in \Lambda_k}c_i(r'_i,s'_i).\]

By convention, if $(b_i,a_i)=k(b_{i+1},a_{i+1})$ for some $k \geq 0$, rational, and some $i$ from 0 to $n$, we say that $(r_i,s_i)=(0,0)$.

Then by \ref{bigdecomp},
\begin{equation}\label{rs}(r,s)=\sum_{i=0}^n(r_i,s_i)\end{equation}
and
\begin{equation}\label{bigmax}\max(ar,bs)=\sum_{i=0}^n\max(ar_i,bs_i).\end{equation}

Assume that $(r,s) \in Q_j$, so by the cone structure
\[p_1^{\max(a_1r,b_1s)}\cdots p_n^{\max(a_nr,b_ns)}=p_1^{a_1r}\cdots p_j^{a_jr}p_{j+1}^{b_{j+1}s}\cdots p_n^{b_ns}.\]
Therefore
\begin{align*}a_1r&=\sum_{i=0}^n \max(a_1r_i,b_1s_i)\\
\vdots \quad &=\quad \vdots\\
a_jr&=\sum_{i=0}^n\max(a_jr_i,b_js_i)\\
b_{j+1}s&=\sum_{i=0}^n\max(a_{j+1}r_i,b_{j+1}s_i)\\
\vdots \quad &= \quad \vdots\\
b_{n}s&=\sum_{i=0}^n\max(a_{n}r_i,b_{n}s_i).\\
\end{align*}

Also, since every nonzero $(r_k,s_k)$ is in $Q_k \setminus Q_{k-1}$, for all $k=0, \ldots n$, 
\begin{align*}a_ir_k &\geq b_is_k &\textrm{ for all }i < k\\
a_kr_k&>b_ks_k&\\
a_{i}r_{k}&\leq b_{i}s_{k} &\textrm{ for all }i >k.\\
\end{align*}

Therefore, the above sums become
\[\begin{array}{ccccc}a_1r &=& b_1\sum_{i=0}^0s_i&+&a_1\sum_{i=1}^n r_i\\
a_2r&=&b_2\sum_{i=0}^1s_i&+&a_2\sum_{i=2}^nr_i\\
\vdots \quad &=&\quad \vdots&&\vdots\\
a_jr&=& b_j\sum_{i=0}^{j-1}s_i&+& a_j\sum_{i=j}^nr_i\\
b_{j+1}s&=&b_{j+1}\sum_{i=0}^js_i&+&a_{j+1}\sum_{i=j+1}^nr_i\\
\vdots \quad &=& \quad \vdots&&\vdots\\
b_{n}s&=&b_n\sum_{i=0}^{n-1}s_i&+&a_{n}\sum_{i=n}^nr_i.\\
\end{array}\]

By (\ref{rs}), we can again clear terms on both sides to obtain
\[\begin{array}{ccc}a_1\sum_{i=0}^0r_i &=& b_1\sum_{i=0}^0s_i\\
a_2\sum_{i=0}^1r_i&=&b_2\sum_{i=0}^1s_i\\
\vdots \quad &=&\quad \vdots\\
a_j\sum_{i=0}^{j-1}r_i&=& b_j\sum_{i=0}^{j-1}s_i\\
b_{j+1}\sum_{i=j+1}^ns_i&=&a_{j+1}\sum_{i=j+1}^nr_i\\
\vdots \quad &=& \quad \vdots\\
b_{n}\sum_{i=n}^n s_i&=&a_{n}\sum_{i=n}^nr_i.\\
\end{array}\]

By the last equation in the above collection, $b_ns_n=a_nr_n$. So $(r_n,s_n)$ lies on the line with slope $a_n/b_n$. But this is the face in between $Q_n$ and $Q_{n-1}$, contradicting the definition of $(r_n,s_n)$. So there are no generators coming from $Q_n$, so $(r_n,s_n)=(0,0)$.

From the $n-1$th equation in the collection,
$b_{n-1}s_{n-1}+b_{n-1}s_n=a_{n-1}r_{n-1}+a_{n-1}r_n$. But since $(r_n,s_n)=(0,0)$, $a_{n-1}/b_{n-1}=s_{n-1}/r_{n-1}$. So $(r_{n-1},s_{n-1})$ lies on the face in between $Q_{n-1}$ and $Q_{n-2}$, contracting the definition of $(r_{n-1},s_{n-1})$. So again, $(r_{n-1},s_{n-1})=0$. Continuing in this way, we see that $(r_k,s_k)=(0,0)$ for all $j+1 \leq k \leq n$.

By the first equation in the list, $a_1r_0=b_1s_0$, so $(r_0,s_0)$ is on the line between $Q_0$ and $Q_1$.

The second equation says $a_2r_0+a_2r_1=b_2s_0+b_2s_1$. But since $(r_0,s_0) \in Q_0$ and $(r_1,s_1) \in Q_1$, $a_2r_0 \leq b_2s_0$ and $a_2r_1 \leq b_2s_1$. Therefore $a_2r_0=b_2s_0$ and $a_2r_1=b_2s_1$.

This together with the fact that $a_1r_0=b_1s_0$ gives
\[\frac{a_2}{b_2}=\frac{s_1}{r_1}=\frac{s_0}{r_0}=\frac{a_1}{b_1}.\]
Therefore, by convention, $(r_1,s_1)=(0,0)$.

In a similar way, the third equation $a_3(r_0+r_2)=b_3(s_0+s_2)$, together with $a_3r_0 \leq b_3s_0$ and $a_3r_2 \leq b_3s_2$ implies that $\frac{a_3}{b_3}=\frac{a_2}{b_2}$, and so by convention $(r_2,s_2)=(0,0)$. Continuing in this fashion shows that $(r_k,s_k)=(0,0)$ for all $1 \leq k \leq j$.

Therefore, $(r,s)=(r_0,s_0)$. Since $(r,s)$ was chosen to be a Hilbert basis element, $(r_0,s_0)$ is too. Therefore $(r,s)$ is not a sum of multiple Hilbert basis elements, and therefore $p_1^{\max(a_1r,b_1s)}\cdots p_n^{\max(a_nr,b_ns)}$ is not a product of multiple algebra generators.\end{proof}

Let $k$ be a field and $Q$ a semigroup. We denote $k[Q]$ the semigroup ring associated to $Q$, namely the $k$-algebra with $k$-basis $\{t^a | a \in Q\}$ and multiplication defined by $t^a \cdot t^b=t^{a+b}$. Also note our notation: when $x$ is a homogeneous element in a semigroup ring, $\log(x)$ denotes its exponent vector, and if $X$ is a collection of homogeneous elements, $\log(X)$ refers to the set of exponent vectors of all the monomials in $X$.

\begin{Prop}[Theorem 3.5 in \cite{Malec}] If $R$ is a polynomial ring in $n$ variables over $k$, and $I$ and $J$ are ideals generated by monomials (i.e. monic products of variables) in $R$, then $\mathcal{B}$ is a semigroup ring.\end{Prop}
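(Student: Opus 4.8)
The plan is to exhibit an explicit semigroup $Q \subseteq \mathbb{Z}^{n+2}$ and an isomorphism $\mathcal{B} \cong k[Q]$ of $k$-algebras. The key observation is that every element of $\mathcal{B}$ lives in $R[u,v] = k[x_1,\dots,x_n,u,v]$, which is itself the semigroup ring $k[\mathbb{N}^{n+2}]$, and that $\mathcal{B}$ is spanned over $k$ by monomials. Indeed, for each fixed $(r,s) \in \mathbb{N}^2$, the graded piece $(I^r \cap J^s)u^r v^s$ is an intersection of monomial ideals times a monomial, hence a monomial ideal, hence $k$-spanned by monomials. So I would first set $Q = \log(\mathcal{B}) = \{(\boldsymbol\gamma, r, s) \in \mathbb{N}^n \times \mathbb{N}^2 : \mathbf{x}^{\boldsymbol\gamma} \in I^r \cap J^s\}$, the set of exponent vectors of all monomials appearing in $\mathcal{B}$.

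Next I would check that $Q$ is closed under addition, i.e.\ is a subsemigroup of $\mathbb{N}^{n+2}$: if $\mathbf{x}^{\boldsymbol\gamma} \in I^r \cap J^s$ and $\mathbf{x}^{\boldsymbol\delta} \in I^{r'} \cap J^{s'}$, then $\mathbf{x}^{\boldsymbol\gamma + \boldsymbol\delta} \in I^{r+r'} \cap J^{s+s'}$, since $I^r I^{r'} \subseteq I^{r+r'}$ and likewise for $J$; this just says $\mathcal{B}$ is closed under multiplication, which is already known. Then I would define the obvious map $k[Q] \to \mathcal{B}$ sending $t^{(\boldsymbol\gamma,r,s)} \mapsto \mathbf{x}^{\boldsymbol\gamma} u^r v^s$ and argue it is an isomorphism: it is a $k$-algebra homomorphism because the multiplication on $k[Q]$ is defined by addition of exponents, which matches multiplication of monomials in $R[u,v]$; it is injective because distinct elements of $Q$ give distinct (in particular $k$-linearly independent) monomials in $R[u,v]$; and it is surjective precisely because $\mathcal{B}$ is $k$-spanned by the monomials $\mathbf{x}^{\boldsymbol\gamma} u^r v^s$ with $(\boldsymbol\gamma, r, s) \in Q$, which is the monomial-ideal observation above.

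The only real point requiring care — and the step I expect to be the main obstacle to write cleanly — is the claim that each homogeneous component $(I^r \cap J^s)u^r v^s$ is spanned by monomials, equivalently that the intersection of two monomial ideals is again a monomial ideal. This is standard (a polynomial lies in a monomial ideal iff each of its terms does, so the intersection is monomial), but it is what makes the whole argument work, and it is genuinely where the hypothesis that $I$ and $J$ are \emph{monomial} ideals enters; everything else in the proof would go through for arbitrary ideals. Once that is in hand, the identification $\mathcal{B} = \bigoplus_{(r,s)} (I^r\cap J^s)u^rv^s = \bigoplus_{(\boldsymbol\gamma,r,s)\in Q} k\cdot \mathbf{x}^{\boldsymbol\gamma}u^rv^s = k[Q]$ is immediate, and I would remark that by Fields' theorem (or Theorem~\ref{UFD} in the principal case) $Q$ is a finitely generated semigroup, so $k[Q]$ is in fact an affine semigroup ring, though finite generation is not needed for the statement as phrased.
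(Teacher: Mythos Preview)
Your argument is correct and is essentially the expected one: identify $\mathcal{B}$ as the $k$-span of the monomials it contains inside $R[u,v]=k[\mathbb{N}^{n+2}]$, check that the exponent set is additively closed, and invoke the standard fact that an intersection of monomial ideals is monomial. Note, however, that the paper does not supply its own proof of this proposition; it is stated with a citation to Theorem~3.5 of Malec's paper and left unproved here, so there is no in-paper argument to compare against. Your proof is the natural one and would serve as a self-contained substitute for the citation.
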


We would like to discuss a couple of important properties of the intersection algebras. We will state them in a more general context, that of fan algebras.

\begin{Def}[Definition 4.1 in \cite{Malec}] Given a fan of cones $\Sigma=\Sigma_{\mathbf{a}, \mathbf{b}}$, a function $f:\N^2 \rightarrow \N$ is called \emph{fan-linear} if 
\begin{enumerate}
\item
$f$ is nonnegative and linear on each subgroup $Q_i=C_i \cap \Z^2$ for each $C_i \in \Sigma_{\mathbf{a}, \mathbf{b}}$, and 
\item
subadditive on all of $\N^2$, i.e.

\[f(r,s)+f(r',s') \geq f(r+r', s+s') \textrm{ for all }(r,s), (r',s') \in \N^2.\]
\end{enumerate}
In other words, the condition $(1)$ can be rephrased as follows:
for $i=0, \ldots, n$, there exists a function $g_i$ linear on $C_i \cap \Z^2$ such that
\[f(r,s) = g_{i}(r,s)  \textrm{ when } (r,s) \in C_i\cap \Z^2 \textrm{ for each } i=0, \ldots n,\] 
and $g_i=g_j$ for every $(r,s) \in C_i \cap C_j\cap \Z^2$.
 
 \medskip
 \noindent
 Then we define
\[\B(\Sigma,f)=\bigoplus_{r, s \geq 0} I_1^{f_1(r,s)}\cdots I_n^{f_n(r,s)}u_1^ru_2^s\]
to be the \emph{fan algebra of $f$ on $\Sigma$}, where $f=(f_1, \ldots, f_n)$.

\end{Def}

\begin{Exa} Let $\mathbf{a}=\{1\}=\mathbf{b}$, so $\Sigma_{\mathbf{a}, \mathbf{b}}$ is the fan defined by 
\begin{align*}C_0&=\{\lambda_1(0,1)+\lambda_2(1,1)|\lambda_i \in \mathbb{R}_{\geq 0}\}\\
C_1&=\{\lambda_1(1,1)+\lambda_2(1,0)|\lambda_i \in \mathbb{R}_{\geq 0}\},\end{align*}
and set $Q_i=C_i \cap \Z^2$.
Also let \[f=\left\{ \begin{array}{ll}
g_0(r,s)=r+2s & \textrm{if $(r,s) \in Q_0$}\\ g_1(r,s)=2r+s & \textrm{if $(r,s) \in Q_1$}\end{array} \right.\]
Then $f$ is a fan-linear function as it was checked in Example 4.2 in~\cite{Malec}.\end{Exa}

Our important property often shared by classes of semigroup rings is normality. It turns out that this property is also shared by fan algebras as it is shown below. For an introduction to normality in the context of semigroup rings, we refer to the standard reference ~\cite{Bruns}, Chapter 6, Section 1.

 \begin{Thm}\label{normal}Let $R$ be the $n$-dimensional polynomial ring over a field, and $I_1, \ldots, I_n$ be principal monomial ideals of $R$. Also, let $\Sigma_{\mathbf{a}, \mathbf{b}}$ be a fan of cones in $\N^2$, with fan-linear functions $f=f_1, \ldots, f_n$. Then $\B(\Sigma_{\mathbf{a},\mathbf{b}},f)$ is normal.
\end{Thm}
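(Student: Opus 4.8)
The plan is to realize $\B(\Sigma_{\mathbf{a},\mathbf{b}},f)$ as an affine semigroup ring and check that its semigroup $S \subseteq \N^{n+2}$ is normal, i.e.\ that $S$ equals the intersection of the group $\Z S$ it generates with the rational cone $\R_{\geq 0} S$ it spans (see \cite{Bruns}, Chapter 6, Section 1). Writing each principal monomial ideal as $I_t = (\mathbf{x}^{\boldsymbol{\alpha}_t})$, an $\N^2$-homogeneous element of degree $(r,s)$ in $\B(\Sigma_{\mathbf{a},\mathbf{b}},f)$ is a monomial $\mathbf{x}^{\boldsymbol{\gamma}}u_1^r u_2^s$ with $\boldsymbol{\gamma} \geq f_1(r,s)\boldsymbol{\alpha}_1 + \cdots + f_n(r,s)\boldsymbol{\alpha}_n$ entrywise; so the underlying semigroup is $S = \{(\boldsymbol{\gamma},r,s) : (r,s)\in\N^2,\ \boldsymbol{\gamma}\in\N^n,\ \boldsymbol{\gamma}\geq \sum_t f_t(r,s)\boldsymbol{\alpha}_t\}$. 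The subadditivity clause in the definition of fan-linear is exactly what makes this set closed under addition, so $S$ is a semigroup; and since each $f_t$ is fan-linear it takes integer values, so $S \subseteq \Z^{n+2}$ is finitely generated by Theorem~\ref{UFD}-type reasoning (or directly, since it is a finitely generated semigroup by Gordan's lemma applied cone-by-cone).

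The key decomposition is the partition of $\N^2$ by the cones $Q_i = C_i \cap \Z^2$ of the fan. On each $Q_i$, every $f_t$ agrees with a linear function $g_{t,i}$, so over the cone $C_i$ the semigroup $S$ restricts to
\[
S_i = \{(\boldsymbol{\gamma},r,s) : (r,s)\in C_i\cap\Z^2,\ \boldsymbol{\gamma}\in\N^n,\ \boldsymbol{\gamma}\geq \textstyle\sum_t g_{t,i}(r,s)\boldsymbol{\alpha}_t\}.
\]
This $S_i$ is the set of lattice points of the rational polyhedral cone $\widetilde{C_i} = \{(\boldsymbol{\gamma},r,s): (r,s)\in C_i,\ \boldsymbol{\gamma}\geq \sum_t g_{t,i}(r,s)\boldsymbol{\alpha}_t,\ \boldsymbol{\gamma}\geq \mathbf{0}\}$ inside the group $\Z^{n+2}$ (the defining inequalities are linear in $(\boldsymbol{\gamma},r,s)$ with rational — indeed integer — coefficients because $g_{t,i}$ is linear with rational coefficients). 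Hence each $S_i$ is normal: it is the full set of lattice points of a rational cone, so $S_i = \Z^{n+2}\cap \R_{\geq 0}S_i$. Moreover $S = \bigcup_{i=0}^n S_i$, and the $S_i$ glue compatibly along the walls $C_i \cap C_j$ because the fan-linear compatibility $g_{t,i} = g_{t,j}$ on $C_i\cap C_j\cap\Z^2$ forces the defining data to match there.

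It remains to upgrade normality of each piece to normality of the union. Suppose $(\boldsymbol{\gamma},r,s)\in \Z^{n+2}$ lies in $\R_{\geq 0}S$ and some positive multiple $m(\boldsymbol{\gamma},r,s)$ lies in $S$. Since the cones $C_i$ tile the first quadrant, $(r,s)$ lies in some $C_i$, hence $m(\boldsymbol{\gamma},r,s)\in S$ actually lies in $S_i$ (the inequalities defining membership in $S$ at a point whose $(r,s)$-part is in $C_i$ are precisely those defining $S_i$); scaling down, $(\boldsymbol{\gamma},r,s)$ is in the cone $\R_{\geq 0}S_i = \widetilde{C_i}$, and since it is integral, normality of $S_i$ gives $(\boldsymbol{\gamma},r,s)\in S_i \subseteq S$. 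The only subtlety — and the step I expect to need the most care — is verifying that $\R_{\geq 0}S = \bigcup_i \widetilde{C_i}$ and, relatedly, that a point of $S$ whose $(r,s)$-coordinate sits on a shared wall $C_i\cap C_j$ is assigned a consistent set of generators: this is where one must invoke condition (1) of fan-linearity (the agreement $g_{t,i}=g_{t,j}$ on the common face) and the convention in Definition~\ref{fan}/the fan-algebra definition that collapses degenerate cones. Once that bookkeeping is pinned down, the cone-wise argument above closes the proof, and one concludes that $k[S] = \B(\Sigma_{\mathbf{a},\mathbf{b}},f)$ is a normal domain.
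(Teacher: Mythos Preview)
Your argument is correct and rests on the same idea as the paper's proof: check saturation of the semigroup directly, using that on each cone $C_i$ the functions $f_t$ are linear, so $f_t(mr,ms)=mf_t(r,s)$ and the inequalities $m\gamma \geq \sum_t f_t(mr,ms)\boldsymbol{\alpha}_t$ scale down to $\gamma \geq \sum_t f_t(r,s)\boldsymbol{\alpha}_t$. The paper carries this out in a single step without introducing the pieces $S_i$ or the cones $\widetilde{C_i}$: it simply observes that $mz\in Q$ with $(mr,ms)\in Q_i$ gives $mz_k\geq f_k(mr,ms)=mf_k(r,s)$, hence $z_k\geq f_k(r,s)$ and $z\in Q$. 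Your decomposition into the $S_i$ and the appeal to ``lattice points of a rational cone are a normal semigroup'' is a valid repackaging of the same computation, but it is extra scaffolding rather than a different method. In particular, the ``subtlety'' you flag about identifying $\R_{\geq 0}S$ with $\bigcup_i\widetilde{C_i}$ and matching generators along walls is not actually needed: the saturation check only requires knowing which $C_i$ contains $(r,s)$ and then dividing by $m$, and the fan-linear compatibility on faces is already built into the well-definedness of $f$, so you can drop that paragraph entirely.
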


\begin{proof}Since $\B$ is a fan-algebra of principal monomial ideals over a polynomial ring, $\B$ is a semigroup ring. So $\B=k[Q]$ for the semigroup $Q$ which consists of all the exponent vectors of all the elements of $\B$. Let $z=(z_1, \ldots, z_n,r,s)$, $m \in \N$ and $mz \in Q$. We claim $z \in Q$.\\
 
 Since $Q$ is the semigroup that defines $\B$ as a semigroup ring, 
 \[Q=\{(a_1, \ldots, a_n, r, s)|a_j\geq f_j(r,s) \textrm{ for all } j=1, \ldots n, (r,s) \in \N^2\}.\]
 And since $mz \in Q$, $(mr,ms) \in Q_i$ for some $i$. Then we have the following:
 \begin{equation*}\begin{aligned}
 mz_1 &\geq f_1(mr,ms)\\
 \vdots & \vdots\\
 mz_n& \geq f_n(mr,ms). \\
 \end{aligned}\end{equation*}
 But each of the functions $f_k$ are fan-linear, so on $Q_i$, $f_k(mr,ms)=mf_k(r,s)$. So we have 
 \[mz_k  \geq mf_k(r,s) \textrm{ for all }k=1, \ldots, n\]
 and so
 \[z_k \geq f_k(r,s) \textrm{ for all } k=1, \ldots, n,\]
 hence $z=(z_1, \ldots, z_n,r,s) \in Q$.
 
 Therefore $Q$ is normal, hence $\B=k[Q]$ is normal.
\end{proof}

All normal semigroup rings are Cohen-Macaulay, a well known result of Hochster~\cite{Hochster} (see also Theorem 6.1.3 in~\cite{Bruns}).

\begin{Cor}
\label{CM} Let $\B(\Sigma_{\mathbf{a},\mathbf{b}},f)$ be as defined in Theorem \ref{normal}. Then $\B$ is Cohen-Macaulay.
\end{Cor}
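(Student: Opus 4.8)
The plan is to reduce immediately to Hochster's theorem on normal semigroup rings. By Theorem~\ref{normal} (and the discussion preceding Corollary~\ref{CM}), $\B = \B(\Sigma_{\mathbf{a},\mathbf{b}},f)$ is a normal semigroup ring: concretely, the proof of Theorem~\ref{normal} exhibits $\B = k[Q]$ where
\[Q=\{(a_1, \ldots, a_n, r, s)\mid a_j\geq f_j(r,s)\ \text{for all } j=1,\ldots, n,\ (r,s) \in \N^2\}\subseteq \N^{n+2},\]
and shows that $Q$ is a normal (saturated) subsemigroup of $\Z^{n+2}$.

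First I would check that $Q$ is an \emph{affine} semigroup, i.e.\ finitely generated: this is needed so that $k[Q]$ is Noetherian and Hochster's theorem applies in its usual form. Finite generation follows because $\B$ is a finitely generated $R$-algebra — in the intersection-algebra case this is Fields' Theorem (with the explicit minimal generating set of Theorem~\ref{UFD}), and for general fan algebras it follows from the fan-linearity of $f$, since on each cone $C_i$ the function $f$ is linear, so the graded pieces of $\B$ lying over $Q_i$ form a finitely generated module over the (finitely generated) semigroup ring associated to $C_i\cap\Z^2$, and there are only finitely many cones. Having fixed a finite generating set for $Q$, $\B=k[Q]$ is a finitely generated $k$-algebra, hence Noetherian.

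Then I would simply invoke Hochster's theorem~\cite{Hochster} (see Theorem~6.1.3 in~\cite{Bruns}): a normal affine semigroup ring over a field is Cohen-Macaulay. Applying this to the normal affine semigroup $Q$ gives that $\B=k[Q]$ is Cohen-Macaulay, which is the assertion.

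I do not expect any genuine obstacle here: the content is entirely in Theorem~\ref{normal} (normality) together with finite generation, and the corollary is a direct citation of Hochster's theorem. The only point requiring a word of care is the verification that the semigroup $Q$ is finitely generated, so that the cited form of Hochster's result is applicable; as indicated above this is immediate from the finite generation of $\B$ as an algebra.
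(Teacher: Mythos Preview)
Your proposal is correct and matches the paper's approach exactly: the paper simply states that all normal semigroup rings are Cohen-Macaulay by Hochster's theorem (citing \cite{Hochster} and Theorem~6.1.3 in \cite{Bruns}) and deduces the corollary from Theorem~\ref{normal}. Your added remark about finite generation is a reasonable point of care that the paper leaves implicit.
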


\begin{Rem}
Similar results to Theorem~\ref{normal} and Corollary~\ref{CM} can be formulated for fan algebras associated to a fan of cones on $\mathbb{N}^n, n \in \mathbb{N}$.
\end{Rem}

\section{Approach via Linear Diophantine Equations with Integer Coefficients}

In this section, we explore in detail the relationship between the intersection algebra for principal ideals in the monomial case and semigroup rings, context where we then conduct our study. First, let us apply Theorem~\ref{UFD} to our situation. 

\begin{Thm}
 \label{gener} Let $I=(x_1^{a_1}\cdots x_n^{a_n})$ and  $J=(x_1^{b_1}\cdots x_n^{b_n})$ be principal ideals in $R=k[x_1, \ldots, x_n]$, and let $\Sigma_{\mathbf{a},\mathbf{b}}$ be the fan associated to $\mathbf{a}=(a_1, \ldots, a_n)$ and $\mathbf{b}=(b_1, \ldots, b_n)$. Let 
 \[Q_i=C_i \cap \Z^2 \textrm{ for every } C_i \in \Sigma_{\mathbf{a},\mathbf{b}}\] 
 and $\cH_i$ be its Hilbert basis of cardinality $n_i$ for all $i=0, \ldots, n$.
 Further, let $Q$ be the subsemigroup in $\N^{n+2}$ generated by 
 \[\{(a_1r_{ij}, \ldots, a_ir_{ij},b_{i+1}s_{ij},\ldots,b_ns_{ij},r_{ij},s_{ij})| i=0, \ldots, n, j=1, \ldots, n_i\} \cup \log(x_1, \ldots, x_n),\]
 where $x_i$ are considered as variables in $R[u,v]$ and $(r_{ij},s_{ij})\in \cH_i$ for every $i=0, \ldots n, j=1, \ldots, n_i$. Then $\mathcal{B}=k[Q]$.
 \end{Thm}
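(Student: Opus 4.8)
The plan is to identify both $\mathcal{B}$ and $k[Q]$ as semigroup rings on one and the same subsemigroup of $\N^{n+2}$. Since $I$ and $J$ are principal monomial ideals, $I^{r}=(x_{1}^{a_{1}r}\cdots x_{n}^{a_{n}r})$ and $J^{s}=(x_{1}^{b_{1}s}\cdots x_{n}^{b_{n}s})$, so $I^{r}\cap J^{s}$ is again a principal monomial ideal, generated by the least common multiple $x_{1}^{\max(a_{1}r,b_{1}s)}\cdots x_{n}^{\max(a_{n}r,b_{n}s)}$. Consequently $\mathcal{B}$ is spanned over $k$ by the monomials $x_{1}^{c_{1}}\cdots x_{n}^{c_{n}}u^{r}v^{s}$ with $c_{j}\ge\max(a_{j}r,b_{j}s)$ for all $j$, and being a $k$-subalgebra of $R[u,v]$ it is the semigroup ring $k[Q']$ on
\[Q'=\{(c_{1},\dots,c_{n},r,s)\in\N^{n+2}:c_{j}\ge\max(a_{j}r,b_{j}s)\text{ for }j=1,\dots,n\},\]
which is a subsemigroup because $\max$ is subadditive (in particular this re-proves that $\mathcal{B}$ is a semigroup ring, as in the Proposition above, i.e.\ Theorem~3.5 of~\cite{Malec}). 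It therefore remains to prove $Q=Q'$.

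For $Q\subseteq Q'$ it suffices to check the listed generators. Plainly $\log(x_{\ell})\in Q'$. For $w_{ij}:=(a_{1}r_{ij},\dots,a_{i}r_{ij},b_{i+1}s_{ij},\dots,b_{n}s_{ij},r_{ij},s_{ij})$ the pair $(r_{ij},s_{ij})$ lies in $Q_{i}=C_{i}\cap\Z^{2}$, and since $C_{i}$ is spanned by $(b_{i},a_{i})$ and $(b_{i+1},a_{i+1})$ the fan-ordering forces $a_{\ell}r_{ij}\ge b_{\ell}s_{ij}$ when $\ell\le i$ and $a_{\ell}r_{ij}\le b_{\ell}s_{ij}$ when $\ell\ge i+1$; hence the first $n$ entries of $w_{ij}$ are exactly $\max(a_{\ell}r_{ij},b_{\ell}s_{ij})$, so $w_{ij}\in Q'$. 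As $Q'$ is a semigroup, $Q\subseteq Q'$. (One could instead conclude here by quoting Theorem~\ref{UFD} directly: it states that $\mathcal{B}$ is generated as an $R$-algebra by the monomials with exponent vectors $w_{ij}$, hence as a $k$-algebra by those together with $x_{1},\dots,x_{n}$; for a monomial subalgebra of a polynomial ring the set of exponent vectors of its monomials equals the subsemigroup generated by the exponent vectors of a generating set, so $Q'=Q$ at once.)

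For the reverse inclusion, take $(c_{1},\dots,c_{n},r,s)\in Q'$. By the partition remark following Definition~\ref{fan} we have $(r,s)\in Q_{i}$ for some $i$, and since $\cH_{i}$ is the Hilbert basis of $Q_{i}$ we can write $(r,s)=\sum_{j}m_{j}(r_{ij},s_{ij})$ with $m_{j}\in\N$. On $Q_{i}$ each function $\max(a_{\ell}\,\cdot\,,b_{\ell}\,\cdot\,)$ is linear, so $\sum_{j}m_{j}w_{ij}=(\max(a_{1}r,b_{1}s),\dots,\max(a_{n}r,b_{n}s),r,s)\in Q$, the minimal element of $Q'$ with last two coordinates $(r,s)$. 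Adding $\sum_{\ell=1}^{n}\bigl(c_{\ell}-\max(a_{\ell}r,b_{\ell}s)\bigr)\log(x_{\ell})$, whose coefficients are nonnegative by the description of $Q'$, produces $(c_{1},\dots,c_{n},r,s)$, so $Q'\subseteq Q$ and $\mathcal{B}=k[Q]$. The only point requiring care is the coordinate-by-coordinate inequality bookkeeping under the conventions $a_{i}/b_{i}=\infty$ when $b_{i}=0$ and $(r_{i},s_{i})=(0,0)$ on degenerate faces; these are exactly the cases already encoded in the fan-ordering, so they introduce no genuinely new difficulty, the substantive inputs being Theorem~\ref{UFD} and the explicit monomial description of $I^{r}\cap J^{s}$.
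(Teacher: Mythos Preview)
Your proof is correct and in substance follows the paper's approach: the paper's own argument is a two-sentence citation of Theorem~\ref{UFD} (Malec) to obtain the $R$-algebra generators of $\mathcal{B}$, then adjoins $x_1,\dots,x_n$ to get $k$-algebra generators, implicitly relying on the earlier Proposition that $\mathcal{B}$ is a semigroup ring. You unpack this by explicitly identifying the exponent semigroup $Q'$ of $\mathcal{B}$ and proving $Q=Q'$ by double inclusion; as you yourself note parenthetically, the nontrivial inclusion $Q'\subseteq Q$ is exactly the content of Theorem~\ref{UFD} in this setting, so the two arguments coincide in substance, yours simply being more self-contained.
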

 
 \begin{proof}
 Since a polynomial ring $R=k[x_1, \ldots, x_n]$ is a UFD, Theorem~\ref{UFD} concludes that the intersection algebras of principal ideals in polynomial rings are finitely generated over $R$. Then, since $R$ is generated over $k$ by the variables $x_1, \ldots, x_n$, adding those variables to our list of generators will provide a generating set for $\B$ over $k$.
 
 \end{proof}
 
 One can hope that the result stated above, which describes a canonical set of generators of the intersection algebra, will help greatly in its study, and indeed this can be done in particular cases. However, there is one serious difficulty. It is notoriously difficult to explicitly compute the generators of the semigroups $Q_i$, $i =0, \ldots, n$ that appear in the result above. They can be theoretically obtained by using classical results of van der Corput \cite{vdC2, vdC1}, however in the form provided by these results, they cannot be readily applied to understand the properties of $\B$ from a theoretical perspective. For example, studying the presentation ideal of $\B$ as a $k$-algebra seems to be a very difficult task, despite the fact that its computation of the algebra generators, and therefore the relations among them, can be performed in any given particular case by using a computer algebra system as shown by Malec in~\cite{Malec}. 
 
Therefore we will present an approach to understanding some of the algebraic properties of $\B(I,J)$ where $I=(x_1^{a_1}\cdots x_n^{a_n})$ and  $J=(x_1^{b_1}\cdots x_n^{b_n})$ are principal ideals in $R=k[x_1, \ldots, x_n]$ that avoids this problem. The approach is based upon work of Stanley in~\cite{Stanley} which will be reviewed shortly.

\medskip
\noindent
From now on, we will assume that $(\mathbf{a},\mathbf{b})$ are fan ordered, however at times we will restate this hypothesis to reinforce it. As remarked earlier, this assumption is not limiting.

First, we need recall some facts from semigroup rings associated to linear diophantine equations with integer coefficients from Chapter I, Section 3 of \cite{Stanley}.
\begin{Def}Let $\Phi$ be an $r \times n$ $\Z$-matrix, $r\leq n$, and rank $\Phi=r$. Define
\[E_{\Phi}:=\{\boldsymbol\beta \in \N^n|\Phi\boldsymbol\beta=0\} \subset {\rm Ker}(\phi).\]
Then $E_{\Phi}$ is clearly a subsemigroup of $\N^n$.\\ Let $R_\Phi:=kE_\Phi$, the semigroup algebra of $E_\Phi$ over $k$. We identify $\boldsymbol\beta \in E_\Phi$ with $x^{\boldsymbol\beta}$, so that $R_\Phi \subseteq k[x_1, \ldots, x_n]$ as a subalgebra graded by monomials.
\end{Def}

To translate our problem into this language, we must describe all monomials in $\B$ as having exponents that are solutions to a system of equations. Since any monomial must have the form $(\prod_i x_i^{m_i}) u^rv^s$, with $m_i\geq \max(a_i r,b_is)$, there exists an $h_i,k_i \in \N$ such that the log of any monomial must satisfy
\[m_i=a_ir+h_i=b_is+k_i, i=1, \ldots, n.\]
Now let
\[\Phi=\Phi_{a,b}=\left(\begin{array}{cccccccccc}
a_1 & 0 & 1 & 0 & -1 &0&0&0& ...&0\\
 0 & b_1 & 0 & 1 & -1 &0&0&0& ...&0\\
 a_2 & 0 &  0 & 0 & 0 &1&0&-1& ...&0\\
  0 & b_2 & 0 & 0 & 0 &0&1&-1& ...&0 \\
  \cdots & \cdots & \cdots & \cdots & \cdots & \cdots & \cdots & \cdots & \cdots \\
 \end{array}\right),\] and $\boldsymbol\beta = (r,s, h_1, k_1, m_1, \dots, h_n, k_n,m_n) \in \N^{3n+2}.$

Then $E_\Phi=E_{\Phi_{a,b}}=\{\boldsymbol\beta \in \N^{3n+2}|\Phi\boldsymbol\beta=0\}$. Then $E_\Phi$ is a subsemigroup in $\N^{3n+2}$, and to recover our semigroup $Q$ where $\B=k[Q]$, we project $E_\Phi$ onto $\N^{n+2}$ by "forgetting" all $h_i$ and $k_i$. This establishes and isomorphism between $Q$ and $E_\Phi$, and so $\B$ and $R_\Phi$ are as well.

Let us define the following additive maps $\pi_i : \mathbb{N}^{3n+2} \to \mathbb{N}^5$, $\pi_i (r,s, h_1, k_1, m_1, \dots, h_n, k_n, m_n) = (r, s, h_i, k_i,m_i)$, for all $i = 1, \ldots, n$.

\begin{Exa}
We revisit the previous example in this new context: recall that $I=(x^5)$ and $J=(x^2)$, so $a_1=5$ and $b_1=2$. Therefore any monomial in $\B$ is of the form $x^{m_1}u^rv^s$ with $m_1 \geq \max(5r,2s)$, and there exists an $h_1,k_1 \in \N$ such that the log of any monomial satisfies the equations
\[m_1=5r+h_1=2s+k_1.\]
Then $\Phi$ becomes
\[ \left(
\begin{tabular}{ccccc}
5 & 0 & 1 & 0 &-1\\
0&2&0&1&-1\\
\end{tabular}\right)\]
and $\boldsymbol\beta=(r,s,h_1,k_1,m_1) \in \N^5$. Then $E_\Phi=\{\boldsymbol\beta \in \N^5|\Phi\boldsymbol\beta=0\}$, and we recover the exponent semigroup $Q$ by ''forgetting" $h_1$ and $k_1$. In particular, $x^6uv^3$ is an element of $\B$, and it corresponds to the vector $(1,3,1,0,6)$ since $5 \cdot 1+1=2 \cdot 3+0=6$. Omitting the 1 and the 0 gives the exponent vector $(3,1,6)$, which is the log of the exponent vector of the original monomial (after reordering). 

Since $n=1$, the map $\pi_1$ is the only projection map, and it is the identity. 
\end{Exa}

Reformulating $\B$ in terms of this matrix allows us to easily prove some more properties of this algebra. This requires a few more definitions and results from \cite{Stanley}.

\begin{Def}(\cite{Stanley})
\label{dfund}
We say that $\boldsymbol\beta \in E_\Phi$ is \emph{fundamental} if $\boldsymbol\beta=\boldsymbol\gamma+\boldsymbol\delta$, $\boldsymbol\gamma, \boldsymbol\delta \in E_\Phi$ implies $\boldsymbol\gamma=\boldsymbol\beta$ or $\boldsymbol\delta=\boldsymbol\beta$. We set
\[\textrm{FUND}_\Phi:=\textrm{ set of fundamental elements of } \Phi,\]and note that it exists since the vectors in $E_{\Phi}$ have nonnegative integer entries.
\end{Def}

It is clear that any set which generates $E_\Phi$ contains $\textrm{FUND}_\Phi$, so \[k[\mathbf{x}^{\boldsymbol\delta}|\boldsymbol\delta \in \textrm{FUND}_\Phi] \subseteq R_\Phi.\] Since $\B = k[Q]$ is finitely generated as a $k$-algebra,  and $E_\Phi$ is isomorphic to $Q$ we see that $E_\Phi$ has finitely many semigroup generators. In particular, $|\textrm{FUND}_\Phi |<\infty$.

\begin{Def}(\cite{Stanley})
\label{dcfund}
We say that $\boldsymbol\beta \in E_\Phi$ is \emph{completely fundamental} if whenever $n >0$ and $n\boldsymbol\beta=\boldsymbol\gamma+\boldsymbol\delta$ for $\boldsymbol\gamma, \boldsymbol\delta \in E_\Phi$, then $\boldsymbol\gamma=n_1\boldsymbol\beta$ and $\boldsymbol\delta=n_2\boldsymbol\beta$ for  some $0 \leq n_1, n_2 \leq n, n_1+n_2=n$. We set
\[\textrm{CF}_\Phi := \textrm{ set of completely fundamental elements of }E_\Phi.\]
\end{Def}

\medskip
\noindent
Since any generating set for $E_\Phi$ contains $\textrm{FUND}_\Phi$, and we already know a generating set for $Q \simeq E_\Phi$ as in Theorem~\ref{gener}, $\textrm{FUND}_\Phi$ must be among the points that we get from our generators. We claim that the two sets are in fact equal.

First, recall a few facts about the construction of $\B(I,J)$ by going back to Theorem~\ref{gener}. The generating set for $\B= k[Q]$ is built from the cones
\[C_i =\{\lambda (b_i, a_i)+\mu(b_{i+1},a_{i+1})|\lambda, \mu\in \R_+\}
\]
and, when intersected with $\Z^2$, they form semigroups, $Q_i$, each with a corresponding Hilbert basis $\mathcal{H}_{i}$, $i=0, \ldots, n$. Each basis element has a corresponding algebra generator depending on which cone it comes from.  For example, let $i$ such that $0 \leq i \leq n$. Then
\[\{(a_1r_{ij}, \ldots, a_ir_{ij},b_{i+1}s_{ij},\ldots,b_ns_{ij},r_{ij},s_{ij})| i=0, \ldots, n, j=1, \ldots, n_i\} , \]
 where $(r_{ij},s_{ij})\in \cH_i$ for every $i=0, \ldots n, j=1, \ldots, n_i$, is a generating set for $Q$. 

 This produces a set of generators for $\B$ in the following way:

$$G_i=\{\left( \prod_{j=1}^nx_j^{m_j}\right) u^{r}v^{s}|(r,s) \in \mathcal{H}_{i}, m_{j}=a_{j}r, 1\leq j \leq i, m_{j} =b_js, i+1 \leq j \leq n \}. $$The monomials in $\cup _{i=0}^n G_i$, together with $x_1, \ldots, x_n$, generate $\B$ over $k$.

\begin{Def}Fix $i$, $1\leq i \leq n$. Let $\boldsymbol\alpha_i$ equal the vector in $\mathbb{N}^{3n+2}$ such that $\pi_i (\boldsymbol\alpha_i) = (0, 0, 1, 1, 1)$ and $\pi_j (\boldsymbol\alpha_i) = (0, 0, 0, 0, 0)$ for all $j \neq i$.  Let $A = \{\boldsymbol\alpha_i : i=1, \ldots, n\}$.

For $(r,s) \in \N^2$, we let $$\boldsymbol\beta(r,s) = (r,s, \ldots, \max(a_jr, b_j s) - a_j r, \max(a_jr, b_j s) -b_js, \max(a_jr, b_js), \ldots).$$ That is, for every $j =1, \ldots, n$,
$$\pi_j (\boldsymbol\beta(r,s)) = (r,s, \max(a_jr, b_j s) - a_j r, \max(a_jr, b_j s) -b_js, \max(a_jr, b_js)).$$
For $(r,s)\in \cH_i$, therefore $\boldsymbol\beta(r,s)$ equals the vector $\boldsymbol\beta$ in $\mathbb{N}^{3n+2}$ such that $$\pi_j (\boldsymbol\beta) = (r,s, 0, a_jr-b_js, a_jr), j\leq i, \pi_l(\boldsymbol\beta) = (r, s,  b_ls-a_lr, 0, b_ls), l \geq i+1.$$ Let $B= \{ \boldsymbol\beta(r,s): (r,s) \in \cH_i, i=0, \ldots n\}.$

The reader should note that the elements of $A \subset E_\Phi$ correspond to the generators $x_1, \ldots, x_n$ of $\B$, while the elements of $B \subset E_\Phi$ correspond to the generators of $\B$ that are obtained from the Hilbert bases of $\cH_i$ as in Theorem~\ref{gener}. 
\end{Def}

\begin{Exa} \label{Ex2}Returning to Example \ref{Ex1}, since $n=1$, $\boldsymbol\alpha_1=(0,0,1,1,1)$ and then $A= \{ (0,0,1,1,1)\}$. There is only one projection map $\pi_1$, so for each $(r,s)$ in $\cH_0,$ $\boldsymbol\beta(r,s)=(r,s,b_1s-a_1r,0,b_1s)$, and for each $(r,s)$ in $\cH_1$, $\boldsymbol\beta(r,s)=(r, s,  b_ls-a_lr, 0, b_ls)$. Since $\cH_0=\{(0,1),(1,3),(2,5)\}$, these $\boldsymbol\beta(r,s)$ are then $\{(0,1,2,0,2),(1,3,1,0,6),(2,5,0,0,10)\}$. Similarly, $\cH_1=\{(1,0),(1,1),(1,2),(5,2)\}$ and $\boldsymbol\beta(r,s)=(r,s,0,a_1r-b_1s,a_1r)$ in $\cH_1$, so these $\boldsymbol\beta(r,s)$ are $\{(1,0,0,5,5),(1,1,0,3,5),(1,2,0,1,5),(2,5,0,0,10)\}$. $B$ is the union of both of these sets. The next theorem shows that the union of $A$ and $B$ is the collection of fundamental elements of $E_\Phi$.
\end{Exa}

\begin{Thm}\label{fund}
Let $\mathbf{a,b} \in \N ^n$, and $\Phi=\Phi_{\mathbf{a,b}}$. Then the set of fundamental elements of $E_\Phi$, $\textrm{FUND}_\Phi$, is  $A \cup B$.

\end{Thm}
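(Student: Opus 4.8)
The plan is to show the two inclusions $A \cup B \subseteq \textrm{FUND}_\Phi$ and $\textrm{FUND}_\Phi \subseteq A \cup B$ separately, exploiting the explicit coordinate description of $E_\Phi$ supplied by the projection maps $\pi_i$. First I would record the basic structural fact: a vector $\boldsymbol\beta \in \N^{3n+2}$ lies in $E_\Phi$ if and only if, writing $\pi_i(\boldsymbol\beta) = (r,s,h_i,k_i,m_i)$, one has $m_i = a_ir + h_i = b_is + k_i$ for every $i$. Consequently every $\boldsymbol\beta \in E_\Phi$ satisfies $m_i \geq \max(a_ir, b_is)$, and $\boldsymbol\beta(r,s)$ is exactly the componentwise-smallest element of $E_\Phi$ with first two coordinates $(r,s)$. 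Also, addition in $E_\Phi$ is addition in all $3n+2$ coordinates, and the first-two-coordinates map $E_\Phi \to \N^2$ is an additive surjection onto $\N^2$; since $\N^2$ is partitioned by the $Q_i$, the image of any $\boldsymbol\beta$ lands in some $Q_i$.

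For $A \subseteq \textrm{FUND}_\Phi$: each $\boldsymbol\alpha_i$ has $(r,s) = (0,0)$ and a single nonzero projection equal to $(0,0,1,1,1)$. If $\boldsymbol\alpha_i = \boldsymbol\gamma + \boldsymbol\delta$ in $E_\Phi$, then $\boldsymbol\gamma,\boldsymbol\delta$ also have $(r,s)=(0,0)$, hence by the defining equations every $h_j,k_j$ of each summand equals the corresponding $m_j$, and these nonnegative integers must sum coordinatewise to give the single $1$; so one summand is $\boldsymbol\alpha_i$ and the other is $0$. For $B \subseteq \textrm{FUND}_\Phi$: take $(r,s) \in \cH_i$ and suppose $\boldsymbol\beta(r,s) = \boldsymbol\gamma + \boldsymbol\delta$ with $\boldsymbol\gamma,\boldsymbol\delta \in E_\Phi$. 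Projecting to the first two coordinates gives $(r,s) = (r',s') + (r'',s'')$ in $\N^2$; the key point is that $(r,s)$ is a \emph{Hilbert basis} element of $Q_i$, so this forces one summand, say $(r'',s'')$, to be $(0,0)$ — but one must first check both summands lie in $Q_i$. This follows because $Q_i$ is a cone intersected with $\Z^2$ and is ``saturated'' inside the half-plane structure given by the fan-ordering; alternatively, argue directly that if $(r',s')$ strayed into a neighboring cone then $m_j = \max(a_jr,b_js)$ could not be written as a sum of two terms each $\geq \max$ on its own part without exceeding it, using minimality of $\boldsymbol\beta(r,s)$. Once $(r'',s'')=(0,0)$, the defining equations give $\boldsymbol\delta = \sum_j c_j \boldsymbol\alpha_j$ for some $c_j \in \N$; comparing with the coordinates of $\boldsymbol\beta(r,s)$, whose $h_j$ or $k_j$ entries are forced to be $0$ for the index side matching $\max$, shows all $c_j=0$, so $\boldsymbol\delta = 0$ and $\boldsymbol\gamma = \boldsymbol\beta(r,s)$.

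For the reverse inclusion $\textrm{FUND}_\Phi \subseteq A \cup B$: let $\boldsymbol\beta \in \textrm{FUND}_\Phi$, write $(r,s)$ for its first two coordinates and pick $i$ with $(r,s) \in Q_i$. If $(r,s) = (0,0)$ then the defining equations force $h_j = k_j = m_j$ for all $j$, so $\boldsymbol\beta = \sum_j m_j \boldsymbol\alpha_j$; being fundamental forces exactly one $m_j = 1$ and the rest $0$, i.e. $\boldsymbol\beta \in A$. If $(r,s) \neq (0,0)$, decompose using minimality: $\boldsymbol\beta = \boldsymbol\beta(r,s) + (\boldsymbol\beta - \boldsymbol\beta(r,s))$, and $\boldsymbol\beta - \boldsymbol\beta(r,s) \in E_\Phi$ has first two coordinates $(0,0)$, hence lies in the submonoid generated by $A$. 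Since $\boldsymbol\beta$ is fundamental and $\boldsymbol\beta(r,s) \neq 0$ (as $(r,s)\neq 0$), we get $\boldsymbol\beta - \boldsymbol\beta(r,s) = 0$, so $\boldsymbol\beta = \boldsymbol\beta(r,s)$. Now $(r,s) \in Q_i$; write $(r,s) = \sum_k e_k (r_{ik}, s_{ik})$ as a nonnegative combination of the Hilbert basis $\cH_i$. Using the fact (which I would verify) that $\boldsymbol\beta(\cdot)$ is additive on each $Q_i$ — because $\max(a_j r, b_j s)$ agrees with the single linear form $a_jr$ or $b_js$ throughout $Q_i$ — we get $\boldsymbol\beta(r,s) = \sum_k e_k\, \boldsymbol\beta(r_{ik},s_{ik})$ in $E_\Phi$; fundamentality then forces a single $e_k = 1$, so $(r,s) \in \cH_i$ and $\boldsymbol\beta = \boldsymbol\beta(r,s) \in B$.

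The main obstacle I expect is the bookkeeping in the step showing that when $\boldsymbol\beta(r,s)$ (for $(r,s)$ in a Hilbert basis) splits in $E_\Phi$, both summands' first-two-coordinate parts stay inside the \emph{same} cone $Q_i$ — i.e. that the decomposition cannot ``leak'' across a cone wall. Handling the degenerate faces (where $(b_i,a_i)$ is a rational multiple of $(b_{i+1},a_{i+1})$, so some $Q_i$ collapses and the convention $(r_i,s_i)=(0,0)$ is invoked) needs a little care, and one must also be careful that the linear form computing $\max$ is genuinely constant on all of $Q_i$ including its boundary faces, which is where the fan-ordering hypothesis on $(\mathbf a, \mathbf b)$ gets used. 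Everything else is a direct matching of nonnegative integer coordinates against the defining equations $m_i = a_ir + h_i = b_is + k_i$.
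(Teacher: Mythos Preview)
Your argument is correct, but the paper's proof is shorter in two places, and the comparison is instructive.

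For the reverse inclusion $\textrm{FUND}_\Phi \subseteq A \cup B$, the paper bypasses your direct decomposition entirely: it simply observes that $A \cup B$ corresponds, under the isomorphism $E_\Phi \cong Q$, to the generating set for $\B$ over $k$ already produced in Theorem~\ref{gener}, hence $A \cup B$ generates $E_\Phi$ as a semigroup; since every generating set must contain the fundamental elements (immediate from Definition~\ref{dfund}), the inclusion follows in one line. Your route --- peel off $\boldsymbol\beta(r,s)$ by componentwise minimality, then use additivity of $\boldsymbol\beta(\cdot)$ on each $Q_i$ to reduce to a Hilbert-basis decomposition --- is valid and more self-contained, but it reproves work already packaged in Theorem~\ref{gener}.

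For the obstacle you flag in $B \subseteq \textrm{FUND}_\Phi$ (forcing both summands' $(r,s)$-parts into the same $Q_i$), the paper's device is cleaner than either of your sketches. For $(r,s)\in\cH_i$ and $j\le i$, the $h_j$-coordinate of $\boldsymbol\beta(r,s)$ is $0$; for $l\ge i+1$, the $k_l$-coordinate is $0$. Nonnegativity forces the same zeros in any summands $\boldsymbol\gamma,\boldsymbol\delta$, and the defining equations $a_jr'+h_j'=m_j'=b_js'+k_j'$ then give $a_jr'\ge b_js'$ for $j\le i$ and $b_ls'\ge a_lr'$ for $l\ge i+1$, placing $(r',s')$ in $Q_i$ directly. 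Your ``saturated'' remark does not accomplish this (two lattice points from different cones can sum into $Q_i$), but your alternative via minimality of the $m_j$ is, once unpacked, equivalent to this zero-entries trick.
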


\begin{proof} From the definition of $\boldsymbol\alpha_i$, $i=1, \ldots, n$, (namely because entries are only $0$'s and $1$'s) we see that $A$ consists of fundamental elements.

Fix $i$. We want to show that each $\boldsymbol\beta \in B$ is fundamental, so  say that $\boldsymbol\beta=\boldsymbol\gamma+\boldsymbol\delta$. Then for $l \geq i+1$ \[ \pi_l(\boldsymbol\gamma)=(r',s',h',0,m') \textrm{ and } \pi_l(\boldsymbol\delta)=(r^{''},s^{''},h^{''},0,m^{''}),\] where
\[a_lr'+h'=b_ls'=m'\textrm{ and }a_l^{''}+h^{''}=b_ls''=m^{''},\] which implies that $b_ls' \geq a_lr'$ and $b_ls^{''} \geq a_lr^{''}$. Similarly,   for $j \leq i$, $b_js' \leq a_jr'$ and $b_js^{''} \leq a_jr^{''}$so both $(r',s')$ and $(r^{''},s^{''})$ are in $Q_i$.  But $r=r'+r^{''}$ and $s=s'+s^{''}$, and $(r,s)$ is a Hilbert basis element. So either $(r,s)=(r',s')$ or $(r,s)=(r^{''},s^{''})$, and thus either $\boldsymbol\beta=\boldsymbol\delta$ or $\boldsymbol\beta=\boldsymbol\gamma$, and so 
$$A \cup B \subseteq \textrm{FUND}_\Phi.$$

As remarked in Definition~\ref{fund}, any set of generators of $E_\Phi$ must contain $\textrm{FUND}_\Phi$. But the elements in $A \cup B$ correspond to the algebra generators for $\B$ over $k$ and so they are generators for $E_\Phi$. So $\textrm{FUND}_\Phi \subseteq A \cup B$ as well, and this concludes the proof.\\

\end{proof}

We will proceed to determining the completely fundamental elements. Let $\mathbf{a, b} \in \mathbb{N}^{n}$. For $i=0, \ldots, n+1$, we let $p_i, q_i$ be relatively prime, positive integers such that $a_i/b_i = p_i/q_i$, if $b_i \neq 0$. If $b_i =0$ we let $p_i =1, q_i =0$.

%Don't we need \beta(1,0) and \beta(0,1) in CF too? So you could just extend the definition above to go from i=0 up to n+1, since the (b_0,a_0) and (b_n+1,a_n+1) are those two points

\begin{Thm}
\label{CF} 
Let $\mathbf{a,b} \in \N ^n$ and $\Phi=\Phi_{\mathbf{a,b}}$. Then the completely fundamental elements of $E_\Phi$ are \[\textrm{CF}_\Phi=\{\boldsymbol\beta(q_i, p_i), i=0, \ldots n+1\} \cup A.\]
\end{Thm}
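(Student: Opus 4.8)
The plan is to show the two inclusions separately, mirroring the structure of the proof of Theorem~\ref{fund}. First I would check that $A \subseteq \textrm{CF}_\Phi$: each $\boldsymbol\alpha_i$ has entries only $0$ and $1$, so if $n\boldsymbol\alpha_i = \boldsymbol\gamma + \boldsymbol\delta$ with $\boldsymbol\gamma, \boldsymbol\delta \in E_\Phi$, then looking at the coordinate where $\boldsymbol\alpha_i$ has a $1$ forces $\boldsymbol\gamma$ and $\boldsymbol\delta$ to be supported only on those same coordinates; since $E_\Phi$-membership ties the $h_i = k_i = m_i$ entries together equally, $\boldsymbol\gamma$ and $\boldsymbol\delta$ must be integer multiples of $\boldsymbol\alpha_i$ summing correctly. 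Next, for the vectors $\boldsymbol\beta(q_i,p_i)$: the key point is that $(q_i,p_i)$ lies on the ray separating $C_{i-1}$ and $C_i$ (or is the extreme ray $(b_0,a_0)=(0,1)$ when $i=0$, or $(b_{n+1},a_{n+1})=(1,0)$ when $i=n+1$), and $(q_i,p_i)$ is the \emph{primitive} lattice vector on that ray. So if $m\boldsymbol\beta(q_i,p_i) = \boldsymbol\gamma+\boldsymbol\delta$, projecting via the $\pi_j$'s shows the $(r,s)$-parts of $\boldsymbol\gamma,\boldsymbol\delta$ are nonnegative vectors summing to $m(q_i,p_i)$, and — because on this separating ray the two linear functionals $a_j r$ and $b_j s$ agree for all $j$ — each of these parts again lies on the same ray; primitivity of $(q_i,p_i)$ then forces them to be $n_1(q_i,p_i)$ and $n_2(q_i,p_i)$, and the remaining coordinates of $\boldsymbol\gamma,\boldsymbol\delta$ are then determined, giving $\boldsymbol\gamma = n_1\boldsymbol\beta(q_i,p_i)$, $\boldsymbol\delta = n_2\boldsymbol\beta(q_i,p_i)$.

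For the reverse inclusion $\textrm{CF}_\Phi \subseteq \{\boldsymbol\beta(q_i,p_i)\} \cup A$, I would argue that a completely fundamental element is in particular fundamental (taking $n=1$ in Definition~\ref{dcfund} gives exactly the defining property in Definition~\ref{dfund}), so by Theorem~\ref{fund} it lies in $A \cup B$. Elements of $A$ are already accounted for, so it remains to rule out every $\boldsymbol\beta(r,s)$ with $(r,s) \in \cH_i$ that is \emph{not} of the form $\boldsymbol\beta(q_i,p_i)$. The idea: if $(r,s)$ lies in the interior of a two-dimensional cone $C_i$ (not on either bounding ray), then I can write $m(r,s) = (r,s) + (m-1)(r,s)$ inside $Q_i$ and lift this to a decomposition of $m\boldsymbol\beta(r,s)$ in $E_\Phi$; but the pieces $\boldsymbol\beta((r,s))$ and $\boldsymbol\beta((m-1)(r,s))$ need not be multiples of $\boldsymbol\beta(r,s)$ unless... — actually the cleaner route is: $(r,s)$ completely fundamental in the semigroup picture forces $(r,s)$ to generate an extreme ray of the cone $C_i \cap \Z^2$, i.e. $(r,s)$ is a primitive generator of a one-dimensional face of the fan $\Sigma_{\mathbf{a},\mathbf{b}}$, and the one-dimensional faces of the fan are precisely the rays through $(b_i,a_i)$, $i=0,\ldots,n+1$, whose primitive lattice points are the $(q_i,p_i)$. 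So any $\boldsymbol\beta(r,s)$ with $(r,s)$ in the relative interior of some $C_i$, or a non-primitive multiple of a ray generator, admits a nontrivial splitting of some multiple and hence fails to be completely fundamental.

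\textbf{Main obstacle.} The hard part will be the bookkeeping in the reverse inclusion: making precise the claim that \emph{only} the ray generators survive, and in particular handling the boundary/degeneracy cases where consecutive ratios $a_i/b_i$ coincide (so some $C_i$ is degenerate and $\boldsymbol\beta(q_i,p_i)$ may coincide with $\boldsymbol\beta(q_{i+1},p_{i+1})$ or collapse), as well as the extreme rays $i=0$ and $i=n+1$ where one of $p_i,q_i$ vanishes and the formula for $\boldsymbol\beta(r,s)$ degenerates. I would handle this by working cone-by-cone: fix $i$ with $C_i$ two-dimensional, take $(r,s)\in\cH_i$, and split into the case $(r,s)$ interior to $C_i$ versus $(r,s)$ on a bounding ray; in the interior case exhibit an explicit decomposition of $2(r,s)$ (or of $m(r,s)$ for suitable $m$) that is not proportional, using that the monoid $Q_i$ is two-dimensional and $(r,s)$ is not on an extreme ray; in the boundary case observe $(r,s)$ is a positive multiple of the primitive vector on that ray and reduce to checking primitivity. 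The verification that $A \subseteq \textrm{CF}_\Phi$ and that each $\boldsymbol\beta(q_i,p_i)$ is completely fundamental is comparatively routine once the "same-ray" observation (that $a_j r = b_j s$ for all $j$ when $(r,s)$ is on the separating ray) is recorded.
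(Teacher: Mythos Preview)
Your plan follows the same two-inclusion structure as the paper's proof, reducing via Theorem~\ref{fund} to checking which elements of $A\cup B$ are completely fundamental. One claim, however, is wrong and would derail the forward inclusion as you have written it: you assert that on the ray through $(q_i,p_i)$ ``the two linear functionals $a_jr$ and $b_js$ agree for all $j$,'' but this is false---the equality $a_jr=b_js$ holds only for $j=i$ (or for indices $j'$ with $a_{j'}/b_{j'}=a_i/b_i$ in degenerate situations). The paper's argument uses only the single projection $\pi_i$: since $\pi_i(\boldsymbol\beta(q_i,p_i))=(q_i,p_i,0,0,a_iq_i)$, the vanishing of the $h$- and $k$-slots in the $i$th block forces the $(r,s)$-parts of any summands $\boldsymbol\gamma_1,\boldsymbol\gamma_2$ to satisfy $a_ir=b_is$, hence to lie on the ray, and primitivity of $(q_i,p_i)$ then gives $(r_1,s_1)=c_1(q_i,p_i)$. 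The remaining coordinates are pinned down because the $m$-entries of $c\boldsymbol\beta(q_i,p_i)$ are the minimal possible for that $(r,s)$-part.

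For the reverse inclusion the paper is more concrete than your extreme-ray discussion (which is correct in spirit but leaves the actual splitting unwritten). Given $(r,s)\in\cH_i$ not one of the $(q_j,p_j)$'s, write $(r,s)=\lambda(q_i,p_i)+\mu(q_{i+1},p_{i+1})$ with $\lambda,\mu\in\R_{\geq 0}$ not both integers, clear denominators to get integers $c>1$ and $d,e\geq 0$ with $c(r,s)=d(q_i,p_i)+e(q_{i+1},p_{i+1})$, and then use that $\boldsymbol\beta$ is additive on $Q_i$ to obtain $c\boldsymbol\beta(r,s)=d\boldsymbol\beta(q_i,p_i)+e\boldsymbol\beta(q_{i+1},p_{i+1})$, which is visibly not a decomposition into multiples of $\boldsymbol\beta(r,s)$. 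This single line replaces your case analysis and also absorbs the degenerate-cone worries you flag, since if $C_i$ collapses there are simply no interior Hilbert basis elements to eliminate.
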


\begin{proof}  

Now, $\textrm{CF}_\Phi \subseteq \textrm{FUND}_\Phi$, so to determine the completely fundamental elements, we need only discard those fundamental elements which do not fit the definition. Also, note that for each $=0, \ldots, n+1$, $(q_i, p_i)$ is part of the Hilbert Basis $\cH_i$, so each $\boldsymbol\beta(q_i,p_i)$ is a fundamental element by Theorem~\ref{fund}.

Also, it is clear from the positions of the zeros in each of the elements of A that they are completely fundamental. 

Fix $i$. We will show that $\boldsymbol\beta (q_i, p_i)$ is completely fundamental. So, let us assume that there exists $c >0$ such that $c \boldsymbol\beta (q_i, p_i) = \boldsymbol\gamma_1 + \boldsymbol\gamma _2$. 
Then $$ c (q_i, p_i, 0, 0, q_ia_i) = \pi_i (\boldsymbol\gamma_1) + \pi_i (\boldsymbol\gamma_2).$$ Say $\pi_i (\boldsymbol\gamma_j ) = (r_j, s_j, h_j, k_j, m_j)$ for $j=1,2$. So, $h_j =k_j =0,$ and then $r_j p_i = s_j q_i, j=1,2$. This implies that there exist $0 \leq c_1 \leq c$ such that $(r_1, s_1) = c_1 (q_i, p_i)$ and furthermore
$c_1 \boldsymbol\gamma_1 = \boldsymbol\beta(q_i, p_i).$ 

Let $\boldsymbol\beta \in \textrm{FUND}_\Phi$ that is not in the set $\{\boldsymbol\beta(q_i, p_i), i=0, \ldots n+1\} \cup A$, and say without loss of generality that
\[\boldsymbol\beta=\boldsymbol\beta(r,s), (r,s) \in \cH_i.\]
Then since \[(r,s) \in Q_i = C_i \cap \Z, \textrm{ where } C_i=\{\lambda (b_i, a_i)+ \mu(b_{i+1},a_{i+1}): \lambda, \mu \in \R^+\},\]
we can also write $(r,s)=\lambda (q_i, p_i)+ \mu(q_{i+1},p_{i+1})$ for some $\lambda, \mu \in \R^+$. In fact, since $(r,s) \in  \cH_i$, $\lambda, \mu$ are not simultaneously integers.

Clearing denominators then shows that there exists an integer $c >1$, and integers $(d,e)$  such that \[c(r,s)=d (q_i, p_i)+ e(q_{i+1},p_{i+1}).\]

To conclude, note that 

$$c \boldsymbol\beta(r,s) = \boldsymbol\beta (cr, cs) = \boldsymbol\beta( d (q_i, p_i)+ e(q_{i+1},p_{i+1})) = d \boldsymbol\beta (q_i, p_i) + e\boldsymbol\beta(q_{i+1}, p_{i+1}).$$

So $\boldsymbol\beta$ is not in $\textrm{CF}_\Phi$.
\end{proof}

\begin{Exa} \label{Ex3}  In Example \ref{Ex2}, $a_i$ and $b_i$ are already relatively prime for $i=0, 1, 2$, so $a_i=p_i$ and $b_i=q_i$ for all $i$. So the completely fundamental elements will be $\boldsymbol\beta(0,1)=(0,1,2,0,2),\boldsymbol\beta(2,5)=(2,5,0,0,10), \boldsymbol\beta(1,0)=(1,0,0,5,5)$, as well as $\boldsymbol\alpha=(0,0,1,1,1)$.
\end{Exa}

The completely fundamental elements can be used to obtain information about the Hilbert series of the intersection algebra as it was shown by Stanley in~\cite{Stanley}. The reader should keep in mind that, when $I=(x_1^{a_1}\cdots x_n^{a_n})$ and  $J=(x_1^{b_1}\cdots x_n^{b_n})$ are principal ideals in $R=k[x_1, \ldots, x_n]$, then $\B(I, J)$ is a $\N^{n+2}$-graded $k$-algebra. As remarked before, $\B(I,J)$ is $k$-isomorphic to 
$R_\Phi$ which is $\N^{3n+2}$-graded. Therefore, the computation of the Hilbert series of $R_\Phi$ will produce the Hilbert series of $\B$. The necessary results from \cite{Stanley} for the computation of the Hilbert series of $R_\Phi$ are excerpted below.

\begin{Thm}\textup{(Corollary 3.8 in \cite{Stanley})}
\label{S-HS}The Hilbert series of $R_\Phi$ is $F(R_\Phi, \mathbf{z})=\sum_{\boldsymbol\beta \in E_\Phi}\mathbf{z}^{\boldsymbol\beta}$. When it is written in lowest terms, the denominator is $\Pi_{\boldsymbol\beta \in \textrm{CF}_\Phi}(1-\mathbf{z}^{\boldsymbol\beta}).$
\end{Thm}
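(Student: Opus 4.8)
The plan is to reduce the statement to a fact about the lattice points of a pointed rational cone and then invoke (or reprove) Stanley's Corollary~3.8. The equality $F(R_\Phi,\mathbf z)=\sum_{\boldsymbol\beta\in E_\Phi}\mathbf z^{\boldsymbol\beta}$ is immediate, since $R_\Phi=kE_\Phi$ has $k$-basis $\{\mathbf x^{\boldsymbol\beta}:\boldsymbol\beta\in E_\Phi\}$ and the grading records exponent vectors; so all the content lies in the denominator. First I would observe that a vector of $\Z^{3n+2}$ annihilated by $\Phi$ over $\R$ is already annihilated over $\Z$, so that $E_\Phi=C\cap\Z^{3n+2}$, where $C=\ker_\R\Phi\cap\R_{\ge 0}^{3n+2}$ is a pointed (it sits inside the positive orthant) rational cone; hence $E_\Phi$ is a normal affine semigroup and, by the elementary characterization of completely fundamental elements — confirmed concretely by Theorem~\ref{CF} — $\textrm{CF}_\Phi$ is exactly the set of primitive generators of the extreme rays of $C$. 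In this language the assertion is precisely Corollary~3.8 of \cite{Stanley}: one acceptable proof is to check these hypotheses and cite it, but I would also record the argument, which splits into an ``upper bound'' half and a ``no cancellation'' half.

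For the upper bound I would triangulate $C$ into simplicial subcones using only its extreme rays, then pass to a disjoint half-open decomposition $C=\bigsqcup_j\sigma_j^{\circ}$ so that every lattice point lies in exactly one piece. For a half-open simplicial cone with primitive ray generators $\boldsymbol\beta_{j_1},\ldots,\boldsymbol\beta_{j_d}\in\textrm{CF}_\Phi$, the fundamental-parallelepiped computation gives $\sum_{\boldsymbol\beta\in\sigma_j^{\circ}\cap\Z^{3n+2}}\mathbf z^{\boldsymbol\beta}=N_j(\mathbf z)/\prod_{l=1}^d(1-\mathbf z^{\boldsymbol\beta_{j_l}})$ with $N_j$ a polynomial; summing over $j$ shows that $F(R_\Phi,\mathbf z)\cdot\prod_{\boldsymbol\beta\in\textrm{CF}_\Phi}(1-\mathbf z^{\boldsymbol\beta})$ is a polynomial, so the lowest-terms denominator of $F$ divides $\prod_{\boldsymbol\beta\in\textrm{CF}_\Phi}(1-\mathbf z^{\boldsymbol\beta})$.

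For the no-cancellation half I would use that, for a primitive exponent vector, $1-\mathbf z^{\boldsymbol\beta}$ is irreducible (classical criterion for binomials over a field), and distinct vectors of $\textrm{CF}_\Phi$ give non-associate binomials, so $\prod_{\boldsymbol\beta\in\textrm{CF}_\Phi}(1-\mathbf z^{\boldsymbol\beta})$ is squarefree; it then suffices to show that for each $\boldsymbol\beta_0\in\textrm{CF}_\Phi$ the function $F\cdot\prod_{\boldsymbol\beta\ne\boldsymbol\beta_0}(1-\mathbf z^{\boldsymbol\beta})$ is not a polynomial. Let $\rho=\R_{\ge 0}\boldsymbol\beta_0$ be the extreme ray carrying $\boldsymbol\beta_0$. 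Since $\rho$, like every face of a polyhedral cone, is exposed, and $C$ is pointed and rational, I can pick integral linear functionals $\mu,\nu$ on $\Z^{3n+2}$ with $\mu\ge 0$ on $C$ and $\mu^{-1}(0)\cap C=\rho$, and with $\nu>0$ on $C\setminus\{\mathbf 0\}$. Under the monomial substitution $z_i\mapsto s^{\mu(e_i)}t^{\nu(e_i)}$, the series $F$ goes to the formal power series $\sum_{\boldsymbol\beta\in E_\Phi}s^{\mu(\boldsymbol\beta)}t^{\nu(\boldsymbol\beta)}$ — well defined because positivity of $\nu$ on $C\setminus\{\mathbf 0\}$ makes every $\nu$-graded fibre finite and $\mu\ge 0$ keeps the $s$-exponents nonnegative — and its specialization at $s=0$ is $\sum_{\boldsymbol\beta\in\rho\cap\Z^{3n+2}}t^{\nu(\boldsymbol\beta)}=\sum_{m\ge 0}t^{m\nu(\boldsymbol\beta_0)}=1/(1-t^{\nu(\boldsymbol\beta_0)})$. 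Every other $\boldsymbol\beta\in\textrm{CF}_\Phi$ lies off $\rho$, so $\mu(\boldsymbol\beta)>0$ and the image of $\prod_{\boldsymbol\beta\ne\boldsymbol\beta_0}(1-\mathbf z^{\boldsymbol\beta})$ is a polynomial in $s,t$ equal to $1$ at $s=0$. Hence the image of $F\cdot\prod_{\boldsymbol\beta\ne\boldsymbol\beta_0}(1-\mathbf z^{\boldsymbol\beta})$ — which would be a polynomial in $s,t$ were $F\cdot\prod_{\boldsymbol\beta\ne\boldsymbol\beta_0}(1-\mathbf z^{\boldsymbol\beta})$ a polynomial in $\mathbf z$ — specializes at $s=0$ to $1/(1-t^{\nu(\boldsymbol\beta_0)})$, which is not a polynomial in $t$: contradiction. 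Together with the upper bound this pins the denominator of $F$ down to $\prod_{\boldsymbol\beta\in\textrm{CF}_\Phi}(1-\mathbf z^{\boldsymbol\beta})$.

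The hard part will be the no-cancellation step, specifically the bookkeeping that makes the monomial substitution rigorous: one must be sure that it carries the identity expressing that $F\cdot\prod_{\boldsymbol\beta\in\textrm{CF}_\Phi}(1-\mathbf z^{\boldsymbol\beta})$ is a polynomial into a genuine identity of formal power series in $s$ and $t$ (so that setting $s=0$ is legitimate), and that the $s=0$ specialization of the image of $F$ is computed correctly; both points hinge on choosing $\nu$ strictly positive on $C\setminus\{\mathbf 0\}$ and $\mu$ exposing exactly the ray $\rho$. The remaining ingredients — the existence of a disjoint half-open triangulation, the parallelepiped formula, the irreducibility of $1-\mathbf z^{\boldsymbol\beta}$ for primitive $\boldsymbol\beta$, and the identification of $\textrm{CF}_\Phi$ with the extreme-ray generators of $C$ (Theorem~\ref{CF}) — are standard or already in hand.
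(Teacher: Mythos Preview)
The paper does not prove this statement at all: it is quoted verbatim as Corollary~3.8 of \cite{Stanley} and used as a black box. So there is no ``paper's own proof'' to compare against; your proposal goes well beyond what the paper does by actually supplying an argument for Stanley's result rather than merely citing it.

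Your argument is sound. The identification of $E_\Phi$ with the lattice points of a pointed rational cone, of $\textrm{CF}_\Phi$ with the primitive extreme-ray generators, and the triangulation/parallelepiped computation for the upper bound are all standard and correct. For the no-cancellation step your specialization idea works, but the sentence ``which would be a polynomial in $s,t$'' is the one place where the write-up is a little loose: since $\mu(e_i)$ need not be nonnegative, the image of a polynomial in $\mathbf z$ is a priori only a Laurent polynomial in $s$. The clean way to finish is to note that the image of $N=F\cdot\prod_{\boldsymbol\beta}(1-\mathbf z^{\boldsymbol\beta})$, computed as (image of $F$)$\cdot$(image of the product), is a genuine power series in $s,t$ whose value at $s=0$ is $1$; on the other hand, if $G=F\cdot\prod_{\boldsymbol\beta\neq\boldsymbol\beta_0}(1-\mathbf z^{\boldsymbol\beta})$ were a polynomial, then $(1-t^{\nu(\boldsymbol\beta_0)})$ would divide the image of $N$ in $k[s^{\pm 1},t]$, hence (because its leading $t$-coefficient is a unit and the image of $N$ lies in $k[s,t]$) already in $k[s,t]$, and setting $s=0$ would give $(1-t^{\nu(\boldsymbol\beta_0)})\mid 1$, a contradiction. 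With that small clarification your proof is complete; the paper simply invokes the result.
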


Our description of the completely fundamental elements obtained earlier leads to the following result.

\begin{Cor} 
\begin{enumerate}
\item  Let $I=(x_1^{a_1}\cdots x_n^{a_n})$ and  $J=(x_1^{b_1}\cdots x_n^{b_n})$ be principal ideals in $R=k[x_1, \ldots, x_n]$. For all $i=1, \ldots, n$, let $p_i, q_i$ be relatively prime integers such that $a_i/b_i=p_i/q_i$, if $b_i \neq 0$. If $b_i=0$ let $p_i=1, q_i=0$.

The Hilbert series of $\B(I, J)$ as a function of $\mathbf{z}=(r, s, m_1, \ldots, m_n) $ has denominator equal to 

$$ \prod_{i=1}^{n}(1 -  m_i) \cdot \prod_{i=0}^{n+1} (1 - r^{q_i} s^{p_i}\prod_{j=1}^n  m_j^{\max(a_jq_i, b_j p_i)}),$$ when written in its lowest terms.
\item
Let $a,b \in \N$ and $\Phi=\Phi_{a,b}$. Let $p, q$ relatively prime such that $a/b=p/q$. The Hilbert series of $\B((x^a) \cap (x^b))$, as a function of $(r, s, m)$, has the denominator equal to 
\[(1-m)(1-rm^a)(1-sm^b)(1-r^qs^p\nu^{pb})\] when written in lowest terms.

\end{enumerate}
\end{Cor}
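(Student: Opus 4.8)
The plan is to transport the Hilbert series computation from $R_\Phi$ to $\B$ through the $k$-algebra isomorphism $\B\cong R_\Phi$, apply Stanley's Theorem~\ref{S-HS} together with the explicit description of $\textrm{CF}_\Phi$ from Theorem~\ref{CF}, and then read off the denominator after coarsening the grading to the one in which the statement is phrased. Part (2) will fall out as the case $n=1$ with $(a_1,b_1)=(a,b)$.

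First I would make the relation between the two gradings precise. The isomorphism $\B\cong R_\Phi$ carries the natural $\N^{n+2}$-grading of $\B$, in the variables $\mathbf z=(r,s,m_1,\dots,m_n)$ where $m_j$ records the exponent of $x_j$, to the coarsening of the $\N^{3n+2}$-grading of $R_\Phi$ obtained by forgetting the slack coordinates $h_j,k_j$. Indeed, the projection $E_\Phi\to Q$ that ignores the $h_j$ and $k_j$ is a bijection of semigroups (as noted just after the definition of $\Phi$), so if in $F(R_\Phi,\mathbf z')=\sum_{\boldsymbol\beta\in E_\Phi}\mathbf z'^{\boldsymbol\beta}$ one sets all indeterminates corresponding to the $h_j,k_j$ equal to $1$, one obtains exactly $F(\B,\mathbf z)=\sum_{q\in Q}\mathbf z^{q}$.

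Next, by Theorem~\ref{S-HS} the denominator of $F(R_\Phi,\mathbf z')$ in lowest terms is $\prod_{\boldsymbol\beta\in\textrm{CF}_\Phi}(1-\mathbf z'^{\boldsymbol\beta})$, and by Theorem~\ref{CF} we have $\textrm{CF}_\Phi=\{\boldsymbol\beta(q_i,p_i):i=0,\dots,n+1\}\cup A$. It remains to compute the image of each of these vectors under the coarsening map, i.e. to discard the $h_j,k_j$ coordinates. For $\boldsymbol\alpha_i\in A$ the only surviving nonzero coordinate is the $m_i$-coordinate, equal to $1$, so the corresponding term becomes $m_i$, contributing $\prod_{i=1}^n(1-m_i)$. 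For $\boldsymbol\beta(q_i,p_i)$ the definition of $\boldsymbol\beta(r,s)$ gives $r$-coordinate $q_i$, $s$-coordinate $p_i$, and $m_j$-coordinate $\max(a_jq_i,b_jp_i)$, so the corresponding term becomes $r^{q_i}s^{p_i}\prod_{j=1}^n m_j^{\max(a_jq_i,b_jp_i)}$, contributing $\prod_{i=0}^{n+1}\bigl(1-r^{q_i}s^{p_i}\prod_{j=1}^n m_j^{\max(a_jq_i,b_jp_i)}\bigr)$. Multiplying the two products gives the asserted denominator; for (2), taking $n=1$ and $(a_1,b_1)=(a,b)$ and using $\max(aq,bp)=aq=bp=pb$ (because $a/b=p/q$) reproduces $(1-m)(1-rm^a)(1-sm^b)(1-r^qs^pm^{pb})$.

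The step requiring real care — the main obstacle — is that specializing indeterminates can a priori create common factors, so one must check the displayed product is the lowest-terms denominator for the coarser grading, not merely \emph{a} denominator. I would settle this by applying the same machinery directly to $Q$ rather than to $E_\Phi$: since $\B=k[Q]$ is a normal affine semigroup ring (Theorem~\ref{normal}) graded by its full grading group $\Z^{n+2}$, Stanley's result identifies the lowest-terms denominator with $\prod_\gamma(1-\mathbf z^{\gamma})$ as $\gamma$ runs over the completely fundamental elements of $Q$, and because the projection $E_\Phi\to Q$ is a semigroup isomorphism these are exactly the images of $\textrm{CF}_\Phi$ computed above. Equivalently and more concretely, the monomials $m_i$ and $r^{q_i}s^{p_i}\prod_j m_j^{\max(a_jq_i,b_jp_i)}$ are the primitive lattice vectors on the distinct extreme rays of the rational cone $\R_{\ge 0}Q$ cut out by $r\ge 0$, $s\ge 0$, $m_j-a_jr\ge 0$, $m_j-b_js\ge 0$, which forces the reduced denominator to be precisely their product. (In the degenerate case some rays coincide, and one simply deletes the repeated factors.) Once the absence of further cancellation is secured, everything else is bookkeeping.
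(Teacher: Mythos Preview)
Your argument follows the same route as the paper: write the Hilbert series of $R_\Phi$ using Theorem~\ref{S-HS} and the list $\textrm{CF}_\Phi$ from Theorem~\ref{CF}, then pass to $\B$ via the isomorphism $E_\Phi\simeq Q$, which amounts to setting every $h_j,k_j$ equal to $1$; part~(2) is the specialization $n=1$. The one place where you go beyond the paper is the paragraph on the ``lowest terms'' issue: the paper simply substitutes $h_j=k_j=1$ and asserts the result, while you observe that specialization could in principle introduce cancellation and close the gap by invoking the semigroup isomorphism $E_\Phi\to Q$ to transport completely fundamental elements (equivalently, by identifying the extreme rays of $\R_{\ge 0}Q$ directly). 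That extra care is warranted and correct, and the rest of your write-up matches the paper's proof.
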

\begin{proof}
Consider the Hilbert series of $R_\Phi$ as a function of $3n+2$-variables, say $\mathbf{z}=(r, s, h_1, k_1, m_1, \ldots, h_n, k_n, m_n) $. We can use Theorems~\ref{S-HS} and~\ref{CF} to conclude that the Hilbert series of $R_\Phi$ has denominator equal to 
$$ \prod_{i=1}^{n}(1 -  h_i k_i m_i) \cdot \prod_{i=0}^{n+1} (1 - r^{q_i} s^{p_i}\prod_{j=1}^n  h_j^{\max(a_jq_i, b_j p_i) - a_jq_i} k_j ^{\max(a_jq_i, b_j p_i) - b_jp_i} m_j ^{\max(a_jq_i, b_j p_i)}),$$ when written in its lowest terms.

The isomorphism between $E_\Phi$ and $Q$ sends  $(r,s, h_1, k_1, m_1, \dots, h_n, k_n,m_n)$ to $ (r,s, m_1, \dots,m_n)$  which means that the Hilbert series of $\B$ is obtained from that of $R_\phi$ by replacing all $h_i, k_i$ by $1$, for $i=1, \ldots, n$. This produces the claim in (i) and a further specialization gives (ii).
\end{proof}

\medskip
\noindent
The canonical module of a Cohen-Macaulay ring $R$ is a very important classical concept that often can be hard to compute, but it is useful in the study of the homological properties of $R$. A canonical ideal for $R$ is simply an ideal of $R$ which is $R$-isomorphic to the canonical module of $R$. For standard facts about the canonical module, we refer the reader to Chapter 3 in~\cite{Bruns}.

\begin{Def} We say $\boldsymbol\beta \in E_\Phi$ is \emph{positive}, denoted $\boldsymbol\beta >0$, if each coordinate of $\boldsymbol\beta$ is positive. A positive $\boldsymbol\beta \in E_\Phi$ is called minimal if for any $\boldsymbol\gamma>0$, $\boldsymbol\gamma \in E_\Phi$, then $\boldsymbol\gamma-\boldsymbol\beta \geq 0$
\end{Def}

We are moving towards describing the canonical ideal of $\B((\mathbf{x}^\mathbf{a}),(\mathbf{x}^\mathbf{b}))$. The following result provides this description.

\begin{Thm}\textup{(Corollary 13.1 in \cite{Stanley})} Denote the canonical ideal of $R_\Phi$ by $\Omega(R_\Phi)$. Then
\[\Omega(R_\Phi) = k\{\mathbf{x}^{\boldsymbol\beta} : \boldsymbol\beta \in E_\Phi, \boldsymbol\beta >0\}.\]
\end{Thm}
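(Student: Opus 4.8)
The plan is to deduce the statement from the Danilov--Stanley description of the canonical module of a normal affine semigroup ring. First I would observe that $R_\Phi=k[E_\Phi]$ is such a ring: normality follows from Theorem~\ref{normal} applied to $\B\cong R_\Phi$, but it is also immediate from the diophantine presentation, since $E_\Phi=C_\Phi\cap\Z^{3n+2}$ where $C_\Phi=\{\mathbf{x}\in\R_{\geq 0}^{3n+2}:\Phi\mathbf{x}=0\}$ is a rational pointed cone (pointed because $C_\Phi\subseteq\R_{\geq 0}^{3n+2}$). By Hochster's theorem $R_\Phi$ is then Cohen--Macaulay, hence has a canonical module, and since $\Phi$ has rank $2n$ its dimension is $(3n+2)-2n=n+2$, matching the Proposition on $\dim\B$.

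Next I would invoke the standard formula for the canonical module of a normal affine semigroup ring (Danilov, Stanley; see \cite{Bruns}, Theorem~6.3.5, or \cite{Stanley}): if $Q$ generates the cone $C=\R_{\geq 0}Q$, then, realized as an ideal of $k[Q]$, one has $\omega_{k[Q]}=k\{\mathbf{x}^{\boldsymbol\beta}:\boldsymbol\beta\in\operatorname{relint}(C)\cap\Z^{d}\}$, where $\operatorname{relint}$ denotes interior relative to the linear span of $C$. Granting this, the theorem reduces to the polyhedral identity $\operatorname{relint}(C_\Phi)\cap\Z^{3n+2}=\{\boldsymbol\beta\in E_\Phi:\boldsymbol\beta>0\}$.

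For that identity I would argue that every facet of $C_\Phi$ has the form $C_\Phi\cap\{x_i=0\}$ for some coordinate $i$ --- the only facets of $\R_{\geq 0}^{3n+2}$ are the coordinate sections, and $C_\Phi$ is their intersection with $\ker\Phi$. Hence a point of $C_\Phi$ lies on a proper face exactly when some coordinate vanishes, \emph{provided} no coordinate function is identically zero on $C_\Phi$. That proviso holds here: the vectors $\boldsymbol\alpha_i$ and $\boldsymbol\beta(q_i,p_i)$ of Theorems~\ref{fund} and~\ref{CF} --- including $\boldsymbol\beta(0,1)$ and $\boldsymbol\beta(1,0)$, together with any $\boldsymbol\beta(r,s)$ whose slope $s/r$ lies strictly between two consecutive ratios $a_j/b_j$ --- witness, coordinate by coordinate, an element of $E_\Phi$ positive in that slot. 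Intersecting with the lattice then identifies $\operatorname{relint}(C_\Phi)\cap\Z^{3n+2}$ with the strictly positive vectors of $E_\Phi$, which is exactly the asserted description of $\Omega(R_\Phi)$.

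The real difficulty lies not in this polyhedral bookkeeping but in the ingredient being quoted --- the Danilov--Stanley formula itself (equivalently, Stanley's Corollary~13.1), whose proof passes through the computation of the top local cohomology $\Hm{\fm}{n+2}{R_\Phi}$ in the $\Z^{3n+2}$-grading together with graded Matlis duality, and the verification that the resulting interior ideal is indeed the canonical module (it has the right type and injective dimension). If a self-contained treatment were wanted, reproving that statement would be the main task; the reduction sketched above is routine.
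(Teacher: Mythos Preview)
The paper does not prove this statement at all: it is quoted verbatim from Stanley (as Corollary~13.1 in \cite{Stanley}) and used as a black box, with no argument supplied. So there is no ``paper's own proof'' to compare against.

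Your sketch is a correct way to recover the result. The key polyhedral step --- that for the specific $\Phi=\Phi_{\mathbf a,\mathbf b}$ no coordinate functional vanishes identically on $E_\Phi$, so that $\operatorname{relint}(C_\Phi)\cap\Z^{3n+2}$ coincides with the strictly positive lattice points --- is exactly what is needed, and your witnesses ($\boldsymbol\alpha_i$, $\boldsymbol\beta(1,0)$, $\boldsymbol\beta(0,1)$, and interior $\boldsymbol\beta(r,s)$) do the job. You are also right that the substantive content is the Danilov--Stanley theorem itself; Stanley's Corollary~13.1 is precisely that theorem specialized to semigroups of the form $E_\Phi$, so your reduction is not a detour but essentially a restatement of what is being cited. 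In short: the paper takes this on faith from \cite{Stanley}, and what you wrote is a faithful outline of why it holds.
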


Since any element in the canonical ideal is a linear combination of positive monomials, it is easy to see that the set of minimal positive elements provides a generating set for this ideal.

We will construct this set of minimal positive elements from $E_\Phi$ for our $\Phi$, and obtain a generating set for the canonical ideal $\Omega(\B)$.

\begin{Thm}\label{min}  Let $\mathbf{a,b} \in \N ^n$ and $\Phi=\Phi_{\mathbf{a,b}}$. Assume that $(\mathbf{a,b})$ is non-degenerate. Let $C$ be the set
\[\{\boldsymbol\gamma+(0,0,1, \ldots, 1)|\boldsymbol\gamma \in \textrm{FUND}_\Phi \setminus (\{ \boldsymbol\alpha_i, i=1, \ldots, n \} \cup \{\boldsymbol\beta(1,0), \boldsymbol\beta(0,1)\})\}.\]
Then the set of minimal positive $\boldsymbol\beta \in E_\Phi$ is equal to $C$.
\end{Thm}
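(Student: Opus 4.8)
The plan is to characterize the minimal positive elements of $E_\Phi$ directly, working through the isomorphic semigroup $Q$ where $\B = k[Q]$, and then translate back. Recall that $Q = \{(z_1,\ldots,z_n,r,s) : z_j \geq f_j(r,s) \text{ for all } j\}$ where $f_j(r,s) = \max(a_j r, b_j s)$ is fan-linear. Under the isomorphism $E_\Phi \cong Q$ (forgetting the $h_i, k_i$), a vector $\boldsymbol\beta \in E_\Phi$ is positive iff its image $(z_1,\ldots,z_n,r,s)$ has all $z_j > 0$, $r > 0$, $s > 0$, \emph{and} all the slack coordinates $h_i = z_i - a_i r$, $k_i = z_i - b_i s$ are positive — that is, $z_i > \max(a_i r, b_i s)$ strictly, for every $i$. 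So the task is: find the minimal elements of the set $P := \{(z_1,\ldots,z_n,r,s) : r,s \geq 1,\ z_j \geq f_j(r,s) + 1 \text{ for all } j\}$, where minimality means $\boldsymbol\gamma - \boldsymbol\beta \geq 0$ coordinatewise for all $\boldsymbol\gamma \in P$ (note Stanley's definition demands this strong "divides everything" condition, not merely minimality in the partial order).

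**Reduction of the minimality condition.** First I would observe that if $\boldsymbol\beta = (z_1,\ldots,z_n,r,s)$ is minimal positive in the sense above, then necessarily $z_j = f_j(r,s) + 1$ for each $j$ (otherwise subtract a unit vector in the $z_j$ direction and check one still lands in $P$ — one does, since the constraint is just $z_j \geq f_j(r,s)+1$ and the other coordinates are untouched). So a minimal positive element is exactly $\boldsymbol\beta(r,s) + (0,\ldots,0,1,\ldots,1)$ for some $(r,s)$ with $r,s\geq 1$, where the appended $1$'s sit in the $z$-coordinates; this matches the shape of the claimed set $C$, since for $\boldsymbol\gamma = \boldsymbol\beta(r,s)$ the vector $\boldsymbol\gamma + (0,0,1,\ldots,1)$ adds $1$ to each $m_j$ and to each pair $(h_j,k_j)$ — one should check Stanley's increment convention matches, but morally adding $(1,\ldots,1)$ to a fundamental $\boldsymbol\beta(r,s)$ produces exactly "strictly interior" lattice points. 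The real content is then to determine for which $(r,s)$ the resulting vector is minimal in the strong sense, and to show these $(r,s)$ are precisely $\log$ of $\cH_i$ across all $i$, minus the two extreme rays $(1,0)$ and $(0,1)$ — equivalently, $\boldsymbol\gamma$ ranges over $\textrm{FUND}_\Phi$ with the $\boldsymbol\alpha_i$ and $\boldsymbol\beta(1,0), \boldsymbol\beta(0,1)$ removed, using Theorem~\ref{fund} which identifies $\textrm{FUND}_\Phi = A \cup B$.

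**Why exactly the Hilbert-basis points (minus the two rays).** For the inclusion $C \subseteq \{\text{minimal positive}\}$: given $\boldsymbol\gamma = \boldsymbol\beta(r,s)$ with $(r,s) \in \cH_i$ and $(r,s) \neq (1,0),(0,1)$, so in particular $r \geq 1$ and $s \geq 1$, I must show that for \emph{every} positive $\boldsymbol\delta \in E_\Phi$ one has $\boldsymbol\delta - (\boldsymbol\gamma + (0,\ldots,1,\ldots,1)) \geq 0$. Writing $\boldsymbol\delta \leftrightarrow (w_1,\ldots,w_n,r',s')$ with $r' \geq 1$, $s' \geq 1$ and $w_j \geq f_j(r',s')+1$, the $(r,s)$- and $(r',s')$-coordinates need $r' \geq r$, $s' \geq s$; this is where non-degeneracy and the minimality of the Hilbert basis enter — the point $(r,s)$ being a Hilbert basis generator with both coordinates positive should be the \emph{componentwise-minimal} such lattice point in $\bigcup_i Q_i = \N^2$... but wait, that is false in general ($\cH_i$ can have several elements). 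So the correct statement must be that $C$ as defined is the set of minimal positive elements, and I expect the resolution is that $\boldsymbol\beta$'s minimality as a \emph{positive} element is weaker than I feared, OR that the theorem's $C$ is not claimed to be an antichain — re-reading, Stanley's "minimal" requires $\boldsymbol\gamma - \boldsymbol\beta \geq 0$ for all positive $\boldsymbol\gamma$, which forces the minimal positive element to be \emph{unique} if it exists.

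**The main obstacle.** Here is the crux, and where I would focus the real work: \emph{for a non-degenerate pair, does $E_\Phi$ have a unique minimal positive element, or a finite set, and how does that set relate to $C$?} I suspect the intended reading is that the minimal positive elements form an antichain and $C$ is that antichain, so Stanley's "minimal" here is being used in the poset sense (an element $\boldsymbol\beta$ such that no $\boldsymbol\gamma < \boldsymbol\beta$ is positive), and the generating set of $\Omega(\B)$ is the collection of all such. Under that reading, the proof splits into: (a) every minimal positive element has $z_j = f_j(r,s)+1$ (done above); (b) if $(r,s) \notin \bigcup_i \log\cH_i$, then $(r,s) = (r_1,s_1) + (r_2,s_2)$ with both summands in some $Q_i \cup Q_{i'}$ and nonzero, hence $\boldsymbol\beta(r,s) + (0,\ldots,1,\ldots,1)$ dominates $\boldsymbol\beta(r_1,s_1)+(0,\ldots,1,\ldots,1)$ coordinatewise (using subadditivity $f_j(r,s) \geq f_j(r_1,s_1)$ — careful, subadditivity gives $f_j(r_1,s_1)+f_j(r_2,s_2) \geq f_j(r,s)$, the \emph{wrong} direction, so instead use that $f_j(r_1,s_1) \leq f_j(r,s)$ holds because each $f_j$ is monotone nondecreasing in each argument, which it is since $a_j,b_j \geq 0$), so it is not minimal; (c) if $(r,s) \in \log\cH_i$ but equals $(1,0)$ or $(0,1)$, the corresponding vector is not positive at all; (d) conversely each $\boldsymbol\beta(r,s)+(0,\ldots,1,\ldots,1)$ with $(r,s)\in\log\cH_i$, $(r,s)\neq(1,0),(0,1)$, is minimal — here non-degeneracy is needed to ensure the cones are genuinely distinct so the $\cH_i$ don't collapse, and one argues that any positive $\boldsymbol\delta$ strictly below it would give $(r',s') \leq (r,s)$ with $(r',s')$ still forced into $Q_i$ (by the strict inequalities $h_l > 0, k_l > 0$ pinning the cone), contradicting $(r,s)\in\cH_i$ being a minimal generator. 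Step (d), ensuring $(r',s')$ lands in the \emph{same} cone $Q_i$ using the positivity (strictness) of all slack variables, is the step I expect to be the genuine technical heart; the monotonicity observation in (b) is what makes the rest go through cleanly.
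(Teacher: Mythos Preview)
Your reformulation via $Q$ has a structural issue. The semigroup isomorphism $E_\Phi \cong Q$ is a bijection but does \emph{not} intertwine the ambient componentwise orders on $\N^{3n+2}$ and $\N^{n+2}$: comparing in $E_\Phi$ requires, beyond $r\leq r'$, $s\leq s'$, $m_j\leq m'_j$, also the slack conditions $h_j = m_j - a_j r \leq m'_j - a_j r'$ and $k_j = m_j - b_j s \leq m'_j - b_j s'$. Under the plain $\N^{n+2}$ order your set $P$ has, by the very monotonicity of $f_j$ you invoke, a \emph{unique} minimum (the element with $(r,s)=(1,1)$), so that reading cannot recover $C$. If instead you intend the $E_\Phi$ order throughout (as step~(d) suggests, since you appeal to $h_l,k_l$ there), two concrete gaps remain. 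In step~(b), monotonicity of $f_j$ handles only the $m_j$-coordinates; the $k_j$- and $h_l$-comparisons need that $(r_2,s_2)=(r,s)-(r_1,s_1)$ lies in the \emph{same} cone $Q_i$, not mere monotonicity. In step~(d), knowing $(r',s')\in Q_i$ with $(r',s')\leq (r,s)$ componentwise does \emph{not} contradict $(r,s)\in\cH_i$: Hilbert basis elements may be componentwise comparable (witness $(0,1)<(2,5)$ in $\cH_0$ of the paper's running example). What is actually needed is that the \emph{difference} $(r-r',s-s')$ also lies in $Q_i$, so that $(r,s)$ decomposes nontrivially inside $Q_i$; this follows precisely from the so-far unused constraints $k'_j \leq k_j+1$ (for $j\leq i$) and $h'_l \leq h_l+1$ (for $l>i$) coming from $\boldsymbol\delta \leq \boldsymbol\beta(r,s)+(0,0,1,\ldots,1)$ in $E_\Phi$.

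For comparison, the paper stays in $E_\Phi$ throughout, verifying all $3n+2$ coordinate inequalities explicitly. For the direction that every element of $C$ is minimal it argues indirectly: it first shows that any two elements of $C$ are incomparable in $E_\Phi$ (using the slack coordinates to force $i'=i$, and then showing that comparability would put $(r'-r,s'-s)\in Q_i$ and hence decompose the Hilbert basis element $(r',s')$), and then observes that if some $\boldsymbol\beta\in C$ failed to be minimal, any strictly smaller positive element would, by the already-proved first half, dominate another member of $C$ --- contradicting incomparability.
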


\begin{proof}

For every $i=0, \ldots, n-1$, let $\cH'_i = \cH_i \setminus \cH_{i+1}$, and $\cH'_n=\cH_n$. These sets are pairwise disjoint.

First, we show that any minimal positive element of $E_\Phi$ is in $C$. Let $\boldsymbol\beta = (r, s, \ldots, h_t, k_t, m_t, \ldots) $ be a minimal positive element of $E_\Phi$.

Let us assume without loss of generality that $(r, s) \in \cH'_i$, for some $i =0, \ldots, n$. This means that $b_i s\leq a_i r$ and $ b_{i+1}s > a_{i+1}r$.

Then $a_ir+h_t=m_t=b_is+k_t$, and we can decompose $(r,s)$ into a sum of Hilbert basis elements $\sum_h l_h(r_h,s_h)$, with $H$ finite set of indices such that $l_h \geq 1$ for all $h \in H$.

We claim there that $\boldsymbol\beta \geq \boldsymbol\beta(r_{h}, s_{h})$, for every $h \in H$.

Let $1 \leq j \leq i$. 

Then
\[(r,s,h_j,k_j,m_j)=\left(\sum_hl_hr_h,\sum_hl_hs_h, m_j-a_j\sum_hl_hr_h,m_j-b_j\sum_hl_hs_h,m_j\right).\]

We need to show that 
\[(r,s,h_j,k_j,m_j)\geq (r_{h},s_{h},1,a_jr_{h}-b_js_{h}+1,ar_{h}+1).\]

This inequality is obviously satisfied for the first three components. Since $h_j,k_j \geq 1$, and $a_jr \geq b_js$ (as $j \leq i$), $m_j \geq a_jr+1$, so $m_j \geq a_jr_{h}+1$.

Lastly, since $k_j=m_j-b_js$ and $m_j \geq a_jr+1$, \[k_j \geq a_jr-b_js +1 =\sum_h l_{h}(a_jr_{h}-b_js_{h})+1,\] which gives what we wanted.

Now let $l \geq i+1$ and $l \leq n$.
%Do you want to make a reference to the projection maps here?
Then
\[(r,s,h_l,k_l,m_l)=\left(\sum_hl_hr_h,\sum_hl_hs_h, m_l-a_l\sum_hl_hr_h,m_l-b_l\sum_hl_hs_h,m_l\right).\]

We need to show that 
\[(r,s,h_l,k_l,m_l)\geq (r_{h},s_{h},b_ls_{h}-a_lr_{h}+1,1, b_ls_{h}+1).\]

This inequality is satisfied for the first two and the fourth entry.

Now, $m_l = b_l s+ k_l \geq b_l s_{h}+1.$ Finally, $h_l = m_l - a_lr \geq b_ls+1-a_lr= \sum_h l_{h}(b_l s_{h}-a_lr_{h})+1 \geq b_l s_{h}-a_lr_{h}+1.$

Therefore, $\boldsymbol\beta \geq \boldsymbol\beta(r_{h}, s_{h})$ for all $h \in H$. But $\boldsymbol\beta$ is minimal and so there exists a unique $h$ such that $\boldsymbol\beta \geq \boldsymbol\beta(r_{h}, s_{h})$. But $\boldsymbol\beta$ is positive and a simple exercise shows that it must belong therefore to $C$.

Next, we show that any element from $C$ is minimal. First, note that any two elements of $A$ are incomparable: let $\boldsymbol\beta(r,s) \in C, (r,s) \in \cH_i$ and let
\[\boldsymbol\beta'=\boldsymbol\beta(r',s') \textrm{ for another } (r',s') \in \mathcal{H}_{i'}, \textrm{ with }\boldsymbol\beta(r',s')>\boldsymbol\beta(r,s).\]

We claim that $i'=i$. 

Assume that $i' < i$. Then because $i'+1 > i$, $\pi_{i'+1}(\boldsymbol\beta') = (r', s', b_{i'+1}s' - a_{i'+1} r' +1, 1, b_{i'+1} s'+1).$

On the other hand, because $i'+1 \leq i$, $\pi_{i'+1}(\boldsymbol\beta) =(r, s, 1, a_{i'+1}r-b_{i'+1}s +1 , a_{i'+1}r +1)$.

Since $\pi_{i'+1}(\boldsymbol\beta') > \pi_{i'+1}(\boldsymbol\beta)$ implies that $a_{i'+1}r-b_{i'+1}s =0$, so $(r,s) \in \cH_{i'}$.

But $(r,s) \in \cH_i$. Therefore $i=i'$. The case $i' > i$ is similar.

\medskip
\noindent

Note that, since $(r,s)$ and $(r',s')$ are both in $\cH'_i$, 
\[b_i s \geq a_i r, b_i s' \geq a_i r'\] and
\[b_{i+1} s < a_{i+1} r, b_{i+1}s' < a_{i+1} r'.\]

We claim $b_is'-a_ir'+1< b_is-a_ir+1$ or $a_{i+1}r'-b_{i+1}s' +1 < a_{i+1}r-b_{i+1}s +1$, since this is equivalent to
\[b_i(s'-s) < a_i(r'-r)\]
or 
\[a_{i+1}(r'-r) < b_{i+1}(s'-s)\]

Assume neither holds.  Then $(r'-r, s'-s) \in Q_i$.

But $$(r,s) + (r'-r, s'-s) = (r', s'), $$ and then the Hilbert basis element  $(r', s')$ is a sum of two elements in $Q_i$. Impossible. Therefore $\boldsymbol\beta$ and $\boldsymbol\beta'$ are incomparable.\\

Lastly, let $\boldsymbol\beta \in C$ not minimal in $E_\Phi$. Therefore there is an element $\boldsymbol\gamma< \boldsymbol\beta$. And, by the first half of this proof, $\boldsymbol\gamma$ is larger than some element $\boldsymbol\beta' \in C$.  So $\boldsymbol\beta > \boldsymbol\gamma > \boldsymbol\beta'$, which contradicts that all elements of $C$ are incomparable. Therefore any element of $C$ is minimal.
\end{proof}

\begin{Exa} \label{Ex4} Returning to Example \ref{Ex2} recall that 

\begin{align*} FUND_\Phi&=\{(0,0,1,1,1),(0,1,2,0,2), (1,3,1,0,6),(2,5,0,0,10), (1,0,0,5,5),\\
&\qquad(1,1,0,3,5),(1,2,0,1,5)\}.
\end{align*}

 We drop the first one, as that is $\boldsymbol\alpha_1$, as well as the second and fifth, as they are $\boldsymbol\beta(0,1)$ and $\boldsymbol\beta(1,0)$, respectively. Then the set of minimal elements is obtained by adding $(0,0,1,1,1)$ to each of the remaining elements, giving the set
\[\{(1,3,2,1,7),(2,5,1,1,11)(1,1,1,4,6),(1,2,1,2,6)\}.\]
\end{Exa}

\begin{Cor} Let $\mathbf{a,b} \in \N^n$, $\B=\B((\mathbf{x}^\mathbf{a}),(\mathbf{x}^\mathbf{b}))$, and $\Phi=\Phi_{\mathbf{a,b}}$. Then, using the notations in Theorem~\ref{min},  the canonical module of $\B$ is generated by the monomials $\mathbf{x}^{\boldsymbol\beta}$, where $\boldsymbol\beta \in C.$
\end{Cor}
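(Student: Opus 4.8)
The plan is to reduce the corollary to the two theorems already in hand: Stanley's Corollary 13.1, which identifies $\Omega(R_\Phi)$ with the $k$-span of the monomials $\mathbf{x}^{\boldsymbol\beta}$ for $\boldsymbol\beta \in E_\Phi$ with $\boldsymbol\beta > 0$, and Theorem~\ref{min}, which computes the set of \emph{minimal} positive elements of $E_\Phi$ to be exactly $C$. First I would recall that $\B$ is Cohen--Macaulay (Corollary~\ref{CM}), so the canonical module is well-defined and, via the $k$-isomorphism $\B \cong R_\Phi$ induced by the projection $E_\Phi \cong Q$ established before Theorem~\ref{fund}, the canonical ideal of $\B$ corresponds to $\Omega(R_\Phi)$. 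Then I would invoke Stanley's description: $\Omega(R_\Phi)$ is the $k$-vector space with basis all monomials $\mathbf{x}^{\boldsymbol\beta}$, $\boldsymbol\beta \in E_\Phi$ positive.

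Next I would make the elementary observation, already noted in the excerpt just before Theorem~\ref{min}, that the set of positive elements of $E_\Phi$ is a module over the semigroup $E_\Phi$ in the sense that adding any element of $E_\Phi$ to a positive element keeps it positive; consequently the ideal $\Omega(R_\Phi)$ is generated as an $R_\Phi$-module (equivalently as a $\B$-module under the isomorphism) by the monomials corresponding to the minimal positive elements, where minimality is in the order defined just before Theorem~\ref{min}: $\boldsymbol\beta$ is minimal if $\boldsymbol\gamma - \boldsymbol\beta \geq 0$ for every positive $\boldsymbol\gamma \in E_\Phi$. Indeed, if $\boldsymbol\gamma > 0$ and $\boldsymbol\beta$ is a minimal positive element with $\boldsymbol\gamma \geq \boldsymbol\beta$, then $\boldsymbol\gamma = \boldsymbol\beta + (\boldsymbol\gamma - \boldsymbol\beta)$ with $\boldsymbol\gamma - \boldsymbol\beta \in E_\Phi$, so $\mathbf{x}^{\boldsymbol\gamma} \in \mathbf{x}^{\boldsymbol\beta} R_\Phi$; and every positive $\boldsymbol\gamma$ dominates some minimal positive element since $E_\Phi \subseteq \N^{3n+2}$ has no infinite descending chains. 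Thus $\Omega(R_\Phi) = \sum_{\boldsymbol\beta \text{ min.\ pos.}} \mathbf{x}^{\boldsymbol\beta} R_\Phi$.

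Finally I would apply Theorem~\ref{min} to identify the set of minimal positive elements with $C$, and translate back through the isomorphism $R_\Phi \cong \B$: a minimal positive $\boldsymbol\beta \in E_\Phi \subseteq \N^{3n+2}$ maps to its image in $\N^{n+2}$ under the projection that forgets the $h_j, k_j$ coordinates, and the corresponding monomial $\mathbf{x}^{\boldsymbol\beta}$ of $R_\Phi$ corresponds to the monomial of $\B$ with that exponent vector. Hence the canonical module of $\B$ is generated by the monomials $\mathbf{x}^{\boldsymbol\beta}$ with $\boldsymbol\beta \in C$, which is the assertion. The only subtlety worth a sentence of care is making sure the notation is consistent: in the statement ``$\mathbf{x}^{\boldsymbol\beta}$'' should be read as the monomial of $\B$ attached to the (projected) exponent vector, but since Theorem~\ref{min} already records $C \subseteq E_\Phi$ and the projection is injective on $E_\Phi$, no information is lost. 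I do not expect any genuine obstacle here: all the real work — the normality/Cohen--Macaulayness, the identification of $\textrm{FUND}_\Phi$, and the hard combinatorial computation of the minimal positive elements — has been done in Theorems~\ref{normal}, \ref{fund}, and \ref{min}; the corollary is purely a matter of assembling these with Stanley's Corollary 13.1. The one place to be slightly careful is the passage from ``$k$-basis of positive monomials'' to ``module generated by minimal positive monomials,'' i.e.\ checking that $\Omega(R_\Phi)$ really is an ideal and that minimal generators of it as an ideal are exactly the minimal positive elements; but this is immediate from the fact that $E_\Phi + (\text{positives}) \subseteq (\text{positives})$ and the well-ordering of $\N^{3n+2}$.
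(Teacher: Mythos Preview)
Your proposal is correct and follows exactly the approach the paper intends: the paper states this corollary without proof, relying on the sentence just before Theorem~\ref{min} (``it is easy to see that the set of minimal positive elements provides a generating set for this ideal'') together with Stanley's Corollary~13.1 and Theorem~\ref{min} itself. Your write-up simply spells out these steps in more detail than the paper does.
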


\begin{Exa}
Our computations in Example~\ref{Ex4} show that the canonical ideal of $\B=\B((x^5), (x^2))$ is generated by $x^7uv^3, x^{11}u^2v^5, x^6uv, x^6uv^2.$
\end{Exa}

\begin{Cor}
\label{number}Let $\mathbf{a,b}$ and $\B$ be as above. Further assume that $(\mathbf{a,b})$ is non-degenerate. Then the number of minimal positive elements of $E_\Phi$, and thus the number of generators of the ideal $\Omega(\B)$ is $\sum_{i=0}^n|\mathcal{H}_i|-(n+2)$.
\end{Cor}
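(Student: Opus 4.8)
The plan is to count the set $C$ from Theorem~\ref{min} directly, since $C$ is exactly the set of minimal positive elements and hence is in bijection with a minimal generating set of $\Omega(\B)$. By construction,
\[
C = \bigl\{\boldsymbol\gamma + (0,0,1,\ldots,1) : \boldsymbol\gamma \in \textrm{FUND}_\Phi \setminus \bigl(\{\boldsymbol\alpha_i : i=1,\ldots,n\} \cup \{\boldsymbol\beta(1,0),\boldsymbol\beta(0,1)\}\bigr)\bigr\},
\]
and adding the fixed vector $(0,0,1,\ldots,1)$ is a bijection, so $|C| = |\textrm{FUND}_\Phi| - |\{\boldsymbol\alpha_i\} \cup \{\boldsymbol\beta(1,0),\boldsymbol\beta(0,1)\}|$. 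Thus the whole problem reduces to computing $|\textrm{FUND}_\Phi|$ and checking that the $n+2$ elements being removed are genuinely distinct elements of $\textrm{FUND}_\Phi$.

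For the first count, I would invoke Theorem~\ref{fund}, which says $\textrm{FUND}_\Phi = A \cup B$ with $A = \{\boldsymbol\alpha_1,\ldots,\boldsymbol\alpha_n\}$ and $B = \{\boldsymbol\beta(r,s) : (r,s)\in \cH_i,\ i=0,\ldots,n\}$. The set $A$ has exactly $n$ elements, and $A\cap B=\emptyset$ (elements of $A$ have $r=s=0$, elements of $B$ have $(r,s)$ a nonzero Hilbert basis vector), so $|\textrm{FUND}_\Phi| = n + |B|$. The subtlety is that $B$ is described as a union over $i=0,\ldots,n$ of the images of $\cH_i$ under the injective map $(r,s)\mapsto \boldsymbol\beta(r,s)$, but the $\cH_i$ overlap: consecutive cones $C_i, C_{i+1}$ share the ray through $(b_{i+1},a_{i+1})$, so $\cH_i \cap \cH_{i+1}$ contains the primitive lattice vector on that ray. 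Here the non-degeneracy hypothesis is essential: it guarantees the slopes $a_i/b_i$ are strictly decreasing, so the only overlaps between the $\cH_i$ occur on these shared extreme rays, each shared by exactly two consecutive cones, and there are exactly $n$ such interior rays (one between $C_{i}$ and $C_{i+1}$ for $i=0,\ldots,n-1$). Since $(r,s)\mapsto\boldsymbol\beta(r,s)$ is injective on $\N^2$, the cardinality of $B$ as a set is $\sum_{i=0}^n |\cH_i|$ minus one for each such double-counted ray generator, i.e. $|B| = \sum_{i=0}^n|\cH_i| - n$. Combining, $|\textrm{FUND}_\Phi| = n + \sum_{i=0}^n|\cH_i| - n = \sum_{i=0}^n|\cH_i|$.

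Next I would verify that the $n+2$ removed elements are distinct and all lie in $\textrm{FUND}_\Phi$. The $\boldsymbol\alpha_i$ are $n$ distinct elements of $A$. The vectors $\boldsymbol\beta(1,0)$ and $\boldsymbol\beta(0,1)$ lie in $B$: indeed $(1,0)$ is the primitive generator of the extreme ray of $C_n$ (recall $a_{n+1}=b_0=0$, $a_0=b_{n+1}=1$, so the boundary rays are $(b_0,a_0)=(0,1)$ and $(b_{n+1},a_{n+1})=(1,0)$), hence $(1,0)\in\cH_n$ and $(0,1)\in\cH_0$; these are distinct from each other and from all $\boldsymbol\alpha_i$ (nonzero $(r,s)$ part), and distinct from one another since $(1,0)\neq(0,1)$ and $\boldsymbol\beta$ is injective. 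So exactly $n+2$ distinct elements are removed from $\textrm{FUND}_\Phi$, giving
\[
|C| = |\textrm{FUND}_\Phi| - (n+2) = \sum_{i=0}^n|\cH_i| - (n+2),
\]
which is the claimed count.

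The main obstacle, and the only place requiring genuine care, is the inclusion-exclusion step controlling exactly how the Hilbert bases $\cH_i$ overlap: one must be sure that under non-degeneracy the overlap $\cH_i\cap\cH_j$ is nonempty only for $|i-j|=1$ and consists of precisely the single primitive lattice point on the shared ray — no triple overlaps, no extra coincidences among higher Hilbert basis elements. This follows because distinct cones in the fan meet only along a common face (a ray), and a primitive lattice vector on a ray is the unique degree-minimal lattice point there, so it is the only Hilbert basis element two adjacent cones can share; interior Hilbert basis elements lie strictly inside their cone and hence in no other. Once this combinatorial bookkeeping is pinned down, the rest is the straightforward arithmetic above.
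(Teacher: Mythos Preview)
Your proposal is correct and follows essentially the same approach as the paper: compute $|\textrm{FUND}_\Phi|$ via Theorem~\ref{fund} by counting the Hilbert basis elements of the $\cH_i$ together with the $n$ elements of $A$, subtracting the $n$ double-counted primitive vectors on the shared rays, and then invoking Theorem~\ref{min} to remove $n+2$ elements. The paper's proof is terser and leaves implicit the details you spell out (disjointness of $A$ and $B$, why the only overlaps among the $\cH_i$ are single primitive vectors on adjacent rays, distinctness of the $n+2$ removed elements), but the underlying argument is the same.
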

\begin{proof}

There is one fundamental element for each Hilbert basis element in $Q_i$, plus one for each of the variables $x_i$. However, the elements that live on the line of slope $a_i/b_i$ are double-counted, since they are in both $ \mathcal{H}_i \cap \mathcal{H}_{i+1}$, for all $i$. So $|\textrm{FUND}_\Phi|=\sum_{i=0}^n |\mathcal{H}_i|$. Then, by Theorem \ref{min}, we need to remove $n+2$ elements, so the statement of the Corollary follows.
\end{proof}

\begin{Exa} In Example \ref{Ex1}, $n=1$, $\cH_0=\{(0,1),(1,1),(2,1),(5,2)\}$ and $\cH_1=\{(1,0),(3,1),(5,2)\}$. So $|\cH_0|+|\cH_1|-(n+2)=4+3-3=4$. Indeed, we produced 4 minimal elements above.
\end{Exa}

Stanley's methods allow us to easily determine whether $\B$ is Gorenstein, since we only need to establish when the canonical ideal is principal.

\begin{Thm}\textup{(Corollary 13.2 in \cite{Stanley})} $R_\Phi$ is Gorenstein if and only if there exists a unique minimal $\boldsymbol\beta >0$ in $E_\Phi$ (i.e. if $\boldsymbol\gamma>0$, $\boldsymbol\gamma \in E_\Phi$, then $\boldsymbol\gamma-\boldsymbol\beta \geq 0$).
\end{Thm}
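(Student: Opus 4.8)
This is a verbatim excerpt from \cite{Stanley}, so strictly speaking its ``proof'' is a pointer to that reference; nevertheless, here is how I would recover it from the material already in place. The plan rests on two ingredients: the description $\Omega(R_\Phi)=k\{\mathbf{x}^{\boldsymbol\beta}:\boldsymbol\beta\in E_\Phi,\ \boldsymbol\beta>0\}$ of the canonical ideal quoted just above, and the standard fact that a Cohen--Macaulay $\N$-graded $k$-algebra with degree-zero part $k$ is Gorenstein if and only if its canonical module is free of rank one, equivalently a canonical ideal of it is principal (see \cite{Bruns}, Chapter 3). Here $E_\Phi=\{\boldsymbol\beta\in\N^{3n+2}:\Phi\boldsymbol\beta=0\}$ is the intersection of $\N^{3n+2}$ with a rational subspace, hence a normal affine semigroup, so $R_\Phi$ is Cohen--Macaulay by Hochster's theorem \cite{Hochster} (this is also Corollary~\ref{CM} via $\B\cong R_\Phi$), and the criterion applies. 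Thus the problem reduces to: \emph{when is the monomial ideal $\Omega(R_\Phi)$ principal?}

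The key step is to identify a minimal monomial generating set of $\Omega(R_\Phi)$ over $R_\Phi$. If $\boldsymbol\gamma\in E_\Phi$ is positive and $\boldsymbol\beta\le\boldsymbol\gamma$ is a componentwise-minimal positive element of $E_\Phi$, then $\boldsymbol\gamma-\boldsymbol\beta\ge0$ and $\Phi(\boldsymbol\gamma-\boldsymbol\beta)=0$, so $\boldsymbol\gamma-\boldsymbol\beta\in E_\Phi$ and $\mathbf{x}^{\boldsymbol\gamma}=\mathbf{x}^{\boldsymbol\beta}\cdot\mathbf{x}^{\boldsymbol\gamma-\boldsymbol\beta}\in\mathbf{x}^{\boldsymbol\beta}R_\Phi$; hence the monomials attached to componentwise-minimal positive elements generate $\Omega(R_\Phi)$. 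Conversely, writing $\fm$ for the irrelevant maximal ideal, one checks that $\mathbf{x}^{\boldsymbol\beta}\in\fm\,\Omega(R_\Phi)$ forces $\boldsymbol\beta=\boldsymbol\epsilon+\boldsymbol\delta$ with $\boldsymbol\delta\in E_\Phi$ positive and $\boldsymbol\delta\neq\boldsymbol\beta$, i.e. $\boldsymbol\beta$ is \emph{not} componentwise-minimal among positive elements of $E_\Phi$; so by graded Nakayama the minimal number of generators of $\Omega(R_\Phi)$ equals the number of componentwise-minimal positive elements of $E_\Phi$ (a finite set, by Dickson's lemma), which is precisely the set studied in Theorem~\ref{min} in the non-degenerate case.

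Combining the two ingredients, $R_\Phi$ is Gorenstein if and only if $\Omega(R_\Phi)$ is principal, i.e. if and only if $E_\Phi$ has exactly one componentwise-minimal positive element $\boldsymbol\beta$. Finally, having a unique componentwise-minimal positive element is the same as having a \emph{least} positive element: every positive $\boldsymbol\gamma\in E_\Phi$ dominates some componentwise-minimal positive element, which under the uniqueness hypothesis must be $\boldsymbol\beta$, so $\boldsymbol\gamma-\boldsymbol\beta\ge0$ for all such $\boldsymbol\gamma$ --- exactly the condition in the statement. The one point requiring care is the two different uses of ``minimal'' in play: in Theorem~\ref{min} and Corollary~\ref{number} it is the componentwise (poset) order on positive elements, whereas the parenthetical in the present statement insists on a least element; the bridge between them is the well-foundedness observation just made, and once that is in hand there is no real obstacle, everything else being a direct translation through Stanley's formula for the canonical ideal.
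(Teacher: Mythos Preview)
The paper does not prove this statement at all: it is quoted verbatim as Corollary~13.2 in \cite{Stanley}, with no argument given. You correctly identify this at the outset, and your reconstruction is sound. The logic runs exactly as it should: $R_\Phi$ is a normal affine semigroup ring, hence Cohen--Macaulay by Hochster; the canonical ideal is the $k$-span of the strictly positive monomials by the preceding theorem (Stanley's Corollary~13.1); a Cohen--Macaulay positively graded $k$-algebra is Gorenstein precisely when its canonical module is free of rank one, i.e.\ when a canonical ideal is principal; and the minimal monomial generators of $\Omega(R_\Phi)$ are exactly the $\mathbf{x}^{\boldsymbol\beta}$ with $\boldsymbol\beta$ componentwise-minimal among positive elements of $E_\Phi$, because $E_\Phi$ is the nonnegative part of a linear subspace so differences of elements of $E_\Phi$ lie in $E_\Phi$ whenever they are nonnegative. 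Your final remark distinguishing ``componentwise-minimal'' from ``least'' and bridging the two via well-foundedness is a nice touch and exactly the point that makes the parenthetical in the statement equivalent to uniqueness of a minimal element. This is precisely the argument Stanley gives, so there is nothing to contrast.
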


\begin{Cor} $\B$ is Gorenstein if and only if there exists a unique minimal $\boldsymbol\beta >0$ in $E_\Phi$.\end{Cor}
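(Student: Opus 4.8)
The plan is to reduce this statement to the already-excerpted Corollary 13.2 of Stanley via the $k$-isomorphism $\B\cong R_\Phi$. First I would recall that, as established earlier in the excerpt, the map ``forgetting'' the $h_i,k_i$ coordinates gives a semigroup isomorphism $E_\Phi\cong Q$ and hence a $k$-algebra isomorphism $R_\Phi\cong k[Q]=\B$. Since Gorensteinness is a property of a ring preserved under $k$-algebra isomorphism, $\B$ is Gorenstein if and only if $R_\Phi$ is Gorenstein.

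Next I would invoke the excerpted Corollary 13.2 of~\cite{Stanley}, which states precisely that $R_\Phi$ is Gorenstein if and only if there is a unique minimal $\boldsymbol\beta>0$ in $E_\Phi$. Composing this equivalence with the isomorphism above yields the claim. One should note that we are entitled to apply Stanley's criterion because $R_\Phi$ is indeed Cohen--Macaulay (it is a normal affine semigroup ring by Theorem~\ref{normal} and Corollary~\ref{CM}), so the canonical module exists and the Gorenstein property is governed by the principality of the canonical ideal $\Omega(R_\Phi)$; by the earlier description $\Omega(R_\Phi)=k\{\mathbf{x}^{\boldsymbol\beta}:\boldsymbol\beta\in E_\Phi,\ \boldsymbol\beta>0\}$, and this ideal is principal exactly when the set of minimal positive $\boldsymbol\beta$ is a singleton.

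Honestly, there is no real obstacle here: the corollary is a direct transport-of-structure consequence of a quoted theorem, so the ``proof'' is essentially one line observing that $\B\cong R_\Phi$ as $k$-algebras and then citing Corollary 13.2. If one wanted to make the statement more self-contained or more useful, the natural follow-up (which I would mention but not carry out) is to feed in Theorem~\ref{min}: in the non-degenerate case the minimal positive elements are exactly the set $C$, so $\B$ is Gorenstein iff $|C|=1$, i.e. iff $\sum_{i=0}^n|\mathcal{H}_i|-(n+2)=1$ by Corollary~\ref{number}. That combinatorial reformulation is where any actual work would lie, but the Corollary as stated requires only the isomorphism and the citation.
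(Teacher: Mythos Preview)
Your proposal is correct and matches the paper's approach exactly: the paper gives no proof at all for this corollary, treating it as immediate from the established $k$-algebra isomorphism $\B\cong R_\Phi$ together with the just-quoted Corollary~13.2 of Stanley. Your additional remarks about Cohen--Macaulayness and the connection to Theorem~\ref{min} and Corollary~\ref{number} are accurate context, and indeed the paper carries out precisely that follow-up in the very next corollary.
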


There are very few Gorenstein intersection algebras of principal monomial ideals. The next result establishes this.

\begin{Cor} Let $\B=\B(I,J)$ be the intersection algebra of two principal monomial ideals $I= (\mathbf{x}^\mathbf{a})$ and $J=(\mathbf{x}^\mathbf{b})$ in $R=k[x_1, \ldots, x_n]$ that is Gorenstein. Assume that $(\mathbf{a,b})$ is non-degenerate. Then $n=1$, $a=b$ and $I=J=(x^a)$.
\end{Cor}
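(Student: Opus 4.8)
The plan is to combine the Gorenstein criterion with the explicit count of generators of the canonical ideal from Corollary~\ref{number}. Since $\B$ is Gorenstein if and only if there is a unique minimal positive element of $E_\Phi$, and by Corollary~\ref{number} the number of minimal positive elements equals $\sum_{i=0}^n |\cH_i| - (n+2)$, the Gorenstein hypothesis forces
\[
\sum_{i=0}^n |\cH_i| = n+3.
\]
First I would argue that each Hilbert basis $\cH_i$ has cardinality at least $2$: the cone $C_i$ is two-dimensional (by non-degeneracy its two bounding rays $(b_i,a_i)$ and $(b_{i+1},a_{i+1})$ are genuinely distinct), so $Q_i = C_i \cap \Z^2$ needs at least the two primitive generators of those rays to be generated, hence $|\cH_i| \ge 2$. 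There are $n+1$ cones $C_0, \dots, C_n$, so $\sum_{i=0}^n |\cH_i| \ge 2(n+1) = 2n+2$. Comparing with $\sum |\cH_i| = n+3$ gives $2n+2 \le n+3$, i.e. $n \le 1$. Since $n \ge 1$, we conclude $n=1$.

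With $n=1$ in hand, the equation becomes $|\cH_0| + |\cH_1| = 4$, forcing $|\cH_0| = |\cH_1| = 2$. Now I would analyze what it means for both Hilbert bases to have exactly two elements. The cone $C_0$ has rays spanned by $(b_0,a_0)=(0,1)$ and $(b_1,a_1)$; the cone $C_1$ has rays spanned by $(b_1,a_1)$ and $(b_2,a_2)=(1,0)$. The Hilbert basis of $C_i \cap \Z^2$ consists of exactly the two primitive ray generators precisely when the cone is "smooth," i.e. those two primitive lattice vectors form a $\Z$-basis of $\Z^2$ (equivalently the $2\times 2$ determinant of the primitive generators is $\pm 1$). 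For $C_0$, with $p_1, q_1$ relatively prime and $a_1/b_1 = p_1/q_1$, the primitive generators are $(0,1)$ and $(q_1,p_1)$; their determinant is $-q_1$, so smoothness forces $q_1 = 1$. For $C_1$, the primitive generators are $(q_1,p_1)$ and $(1,0)$; their determinant is $-p_1$, so smoothness forces $p_1 = 1$. Hence $p_1 = q_1 = 1$, which means $a_1/b_1 = 1$, i.e. $a_1 = b_1$ (after dividing by the gcd, and in particular $b_1 \ne 0$). Writing $a := a_1 = b_1$ we get $I = (x^a) = J$.

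The step I expect to be the main obstacle is the clean justification of the "$|\cH_i| = 2 \iff$ the cone is unimodular" claim, together with pinning down that non-degeneracy is exactly what guarantees $|\cH_i| \ge 2$ for every $i$ (so that the counting bound $\sum |\cH_i| \ge 2(n+1)$ is valid). One has to be a little careful about degenerate situations — a cone whose two bounding rays coincide, or the convention $(b_i,a_i) = k(b_{i+1},a_{i+1})$ from the minimality proof — but the non-degeneracy hypothesis $a_i/b_i > a_{i+1}/b_{i+1}$ for all $i$ rules those out, so every $C_i$ is genuinely two-dimensional and its Hilbert basis contains the two primitive ray vectors, giving the lower bound. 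Once that is secured, the rest is the elementary determinant computation above, and the conclusion $n=1$, $a=b$, $I = J = (x^a)$ follows. One should also double-check against Example~\ref{Ex1}, where $|\cH_0| + |\cH_1| = 7 > 4$, consistent with that intersection algebra not being Gorenstein.
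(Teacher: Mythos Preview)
Your proposal is correct and follows the paper's argument verbatim through the reduction to $n=1$: both use Corollary~\ref{number} to get $\sum_{i=0}^n |\cH_i| = n+3$, invoke non-degeneracy for $|\cH_i|\ge 2$, and conclude $n=1$ and $|\cH_0|=|\cH_1|=2$.

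The only divergence is in the final step. The paper does not invoke the unimodularity criterion; instead it observes that $(1,1)$ lies in one of the two cones, say $Q_0$, and since $\cH_0=\{(0,1),(q,p)\}$ there must exist $n_1,n_2\in\N$ with $(1,1)=n_1(0,1)+n_2(q,p)$, which immediately forces $n_2=q=1$ and then $p=1$. Your determinant argument is the standard toric-geometry fact (a two-dimensional cone has a two-element Hilbert basis iff its primitive ray generators form a $\Z$-basis of $\Z^2$) and is arguably cleaner and more conceptual; the paper's $(1,1)$ argument is more elementary in that it avoids quoting that fact and works directly from the definition of the Hilbert basis. Either route yields $p=q=1$ and hence $a=b$.
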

\begin{proof} 

By Corollary~\ref{number}, if $\B$ is Gorenstein then $\sum_{i=0}^n|\mathcal{H}_i|-(n+2)=1$.

Since $(\mathbf{a,b})$ is non-degenerate, $|\mathcal{H}_i| \geq 2$ then this implies that $n+3 \geq 2(n+1)$ or $1\geq n$. So $n=1$.

Let $I=(x^a)$ and $J=(x^b)$, where $a,b \in \N$. First, note that that the non-degeneracy hypothesis implies that $a$ and $b$ are nonzero.

The number of minimal elements of $\B$ is equal to $|\mathcal{H}_0|+|\mathcal{H}_1|-3$, and since $\B$ is Gorenstein, $|\mathcal{H}_0|+|\mathcal{H}_1|=4$. Recall that both Hilbert bases must contain at least two elements, namely the generators of their cones. Let $q, p$ relatively prime such that $a/b = p/q$.

So $(0,1), (q,p) \in \mathcal{H}_0$ and $(1,0),(q,p) \in \mathcal{H}_1$, and since there are only four in total (including repetitions), these must be the only Hilbert basis elements.

The point $(1,1)$ must lie in one of the cones, say $Q_0$. If $(1,1)$ is a Hilbert basis element by assumption, there exists $n_1,n_2 \in \N$ such that 
\[(1,1)=n_1(0,1)+n_2(q,p)\]
which implies that $n_2q=1$ and $n_1+n_2p=1$. But $a,b,n_1,n_2 \in \N$, so $n_2=q=1$, and therefore $p=1$ as well.

In conclusion $a=b$. It can easily seen now that in this case the intersection algebra is Gorenstein.
\end{proof}

Examining these minimal positive elements leads us to an upper bound on the number of Hilbert basis elements for a fan formed by exactly two cones in the plane. This can be used to provide an upper bound for the minimal number of generators of the canonical ideal in the principal monomial case in one variable polynomial rings.

\begin{Thm} \label{bound} Let $R$, $a$, $b$ relatively prime, and $\Phi$ be as before. Assume without loss of generality that $a > b$. Set $a=bq+l$, with $1 \leq l \leq b-1$. Then the number of minimal positive elements of $E_\Phi$ is bounded above by $a-l+1$. In conclusion, the canonical ideal of $\B((x^a), (x^b))$ has at most $a-l+1$ generators.
\end{Thm}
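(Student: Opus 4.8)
The plan is to turn the count of minimal positive elements into a count of Hilbert basis elements, and then bound that count by subdividing the two planar cones. We are in the one‑variable case $n=1$, and $(a,b)$ is non‑degenerate since $\infty > a/b > 0$; hence Corollary~\ref{number} applies and tells us that the number of minimal positive elements of $E_\Phi$ — equivalently the number of generators of $\Omega(\B)$ — equals $|\cH_0|+|\cH_1|-3$, where $\cH_0,\cH_1$ are the Hilbert bases of $Q_0=C_0\cap\Z^2$ and $Q_1=C_1\cap\Z^2$, with $C_0=\{\lambda(0,1)+\mu(b,a):\lambda,\mu\geq 0\}$ and $C_1=\{\lambda(b,a)+\mu(1,0):\lambda,\mu\geq 0\}$. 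So it suffices to prove $|\cH_0|+|\cH_1|\leq b+q+3$: the asserted bound then follows from $b+q\leq bq+1=a-l+1$, which holds because $(b-1)(q-1)\geq 0$ (note $b\geq 2$ and $q\geq 1$ are forced by $1\leq l\leq b-1$ and $a>b$).

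The key auxiliary fact I would establish is: a two‑dimensional pointed rational cone generated by primitive vectors $v,w$ has at most $|\det(v,w)|+1$ Hilbert basis elements. To prove it, apply an element of $\operatorname{SL}_2(\Z)$ and a shear to normalize $v=(1,0)$ and $w=(p,d)$ with $0\leq p<d:=|\det(v,w)|$; then every Hilbert basis element has second coordinate in $\{0,1,\dots,d\}$ (if it exceeds $d$ one may subtract $w$ and stay in the cone), and for each fixed value of the second coordinate there is at most one Hilbert basis element (a larger first coordinate lets one subtract $(1,0)$), giving at most $d+1$ in total.

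Next I would subdivide each cone along a well‑chosen interior ray. For $C_1$: the point $(1,q)$ lies in $C_1$ (since $a\geq bq$), it is not a sum of two nonzero elements of $Q_1$, hence it belongs to $\cH_1$, and the ray through it splits $C_1$ into $\operatorname{cone}((1,0),(1,q))$, of determinant $q$, and $C_1':=\operatorname{cone}((1,q),(b,a))$, of determinant $\det((1,q),(b,bq+l))=l$. Every element of $\cH_1$ lies in one of these subcones and is then a Hilbert basis element of that subcone (a decomposition inside a subcone is a decomposition inside $Q_1$), and $(1,q)$ is the only lattice point the two subcones share; so the lemma gives $|\cH_1|\leq(q+1)+(l+1)-1=q+l+1$. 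For $C_0$: the point $(1,q+1)$ lies in $C_0$ (since $b(q+1)\geq a$, using $l<b$), belongs to $\cH_0$, and the ray through it splits $C_0$ into the unimodular cone $D:=\operatorname{cone}((0,1),(1,q+1))$ (exactly two Hilbert basis elements) and $C_0'':=\operatorname{cone}((1,q+1),(b,a))$, of determinant $b-l$; hence $|\cH_0|\leq 2+(b-l+1)-1=b-l+2$. Adding, $|\cH_0|+|\cH_1|-3\leq(b-l+2)+(q+l+1)-3=b+q\leq a-l+1$, and the statement about the canonical ideal follows from the earlier corollary identifying its generators with $\{\mathbf{x}^{\boldsymbol\beta}:\boldsymbol\beta\in C\}$.

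I expect the main obstacle to be the combinatorial bookkeeping in the subdivision step: checking carefully that $(1,q)$ and $(1,q+1)$ really are Hilbert basis elements lying strictly between the extreme rays of $C_1$ and $C_0$ (this is exactly where the hypothesis $1\leq l\leq b-1$ is used, to rule out the degenerate slopes $a/b=q$ and $a/b=q+1$), that the two subcones genuinely cover $C_1$ (respectively $C_0$), and that the only overcounting across each subdivision is the single shared lattice point on the dividing ray, so the union bound is as sharp as claimed. The determinant computations and the closing inequality $(b-1)(q-1)\geq 0$ are routine.
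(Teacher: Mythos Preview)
Your argument is correct and takes a genuinely different route from the paper. The paper works directly inside $E_\Phi$: it parameterises candidate minimal positive elements by $\mu=ar-bs$ (resp.\ $\eta=bs-ar$), shows that for each value of $\mu$ with $0<\mu\le a-b$ there is at most one minimal element, collapses all $\mu>a-b$ into the single element coming from $(r,s)=(1,1)$, treats $\mu=0$ separately, and then does the symmetric count on the $Q_0$ side with $\eta$; summing gives $a-l+1$. You instead invoke Corollary~\ref{number} to convert the problem into bounding $|\cH_0|+|\cH_1|$, then use the standard toric fact $|\cH_C|\le |\det|+1$ after subdividing each cone along a carefully chosen ray.

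What each buys: the paper's approach is self-contained and stays close to the description of $\Omega(\B)$ already developed, but its case analysis is somewhat ad hoc. Your approach is cleaner and more conceptual; it also yields the strictly sharper intermediate bound $b+q$ (which you then relax to $a-l+1$ via $(b-1)(q-1)\ge 0$). In the paper's running example $a=5$, $b=2$ one has $q=2$, so your bound gives $4$, matching the true count, whereas the paper's bound is $5$. Your verification that $(1,q)\in\cH_1$ and $(1,q+1)\in\cH_0$ is exactly where $1\le l\le b-1$ enters, and the overcount correction by $1$ on each dividing ray is justified since the primitive vector on that ray is the unique shared Hilbert basis element.
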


\begin{proof} 

Let $(r,s,h,k,m)$ be an element of $E_\Phi$ and recall that $ar+h=m=bs+k$. First define $\mu=ar-bs=k-h$, and let $\mu>0$. So we may write our element as $(r,s,h,\mu+h,ar+h)$. 

Notice that the condition $\mu>0$ is equivalent to $ar-bs>0$, which is the same as considering all the points $(r,s) \in Q_1 \setminus Q_0$. Then $(1,1)$ is a minimal positive element in this cone: for this pair, $\mu=a-b>0$, and there is no other point with smaller positive coordinates. The smallest positive element in $E_\Phi$ corresponding to $(1,1)$ must have $h>0$, so the smallest such element is $(1,1,1,\mu+1,a+1)$. 

There are two sub-cases:\\ 
If $\mu>a-b$,
\begin{equation*}\begin{aligned}
(r,s,h,\mu+h,ar+h) &\geq(r,s,1,\mu+1,ar+1)\\
&\geq (1,1,1,1+\mu,a+1)\\
& \geq (1,1,1,1+a-b,a+1).\\
\end{aligned}
\end{equation*}

If $\mu\leq a-b$, we claim that for each such $\mu$, there exists a unique smallest $r_\mu, s_\mu$ such that $\mu=ar_\mu-bs_\mu$: since $ar-bs=\mu$ determines a line in the $(r,s)$ plane, let $(r_\mu,s_\mu)$ be the point with smallest integer coordinates on that line. Obviously, $(r,s)\geq (r_\mu,s_\mu)$, and the smallest positive element in $E_\Phi$ corresponding to $(r_\mu,s_\mu)$ is $(r_\mu,s_\mu,1,\mu+1,ar_\mu+1)$. Then for $\mu \leq a-b$,
\begin{equation*}\begin{aligned}
(r,s,h,\mu+h,ar+h) &\geq(r,s,1,\mu+1,ar+1)\\
&\geq (r_\mu,s_\mu,1,\mu+1,ar_\mu+1).\\
\end{aligned}
\end{equation*}

If $\mu=0$, then $ar=bs$, and, since $a$ and $b$ are relatively prime (because $l>0$), 
\begin{equation*}\begin{aligned}
(r,s,1,\mu+1,ar+1) &=(r,s,1,1,ar+1)\\
&\geq (b,a,1,1,ab+1).\\
\end{aligned}
\end{equation*}
So, for $\mu>0$, i.e. $Q_1 \setminus Q_0$, the number of minimal positive elements of $E_\Phi$ is no more than $a-b+1$: $a-b-1$ elements for each $0<\mu \leq a-b$, one for $\mu>a-b$, and one for $\mu=0$.\\

Now consider the points of $E_\Phi$ that correspond to pairs $(r,s) \in Q_0$, i.e. where $bs-ar=\eta>0$. Then $\eta=h-k$, so we can write a full element of $E_\Phi$ as $(r,s,\eta+k,k, bs+k)$.

Since $a=bq+l$, we claim that the smallest $(r,s)$ where $bs-ar>0$ (i.e. the smallest pair $(r,s) \in Q_1$), is $(1,q+1)$. This pair is certainly in $Q_0$, since $\eta=b(q+1)-a=bq+b-a=b-l>0$. Also, $s>q$, because $\eta >0$ and so
\[s=\frac{ar+\eta}{b} >q = \frac{(bq+l)r+\eta}{b} = \frac{bqr+lr+\eta}{b}>q.\]
Since $s \in \N$, $s\geq q+1$.
So, if $\eta \geq b-l$,
\begin{equation*}\begin{aligned}
(r,s,\eta+k,k,bs+k) &\geq (r,s,1+\eta,1,bs+1)\\
&\geq (1,q+1,1+\eta,1,b(q+1)+1)\\
&\geq(1,q+1,1+b-l,1,b(q+1)+1).
\end{aligned}
\end{equation*}

If $0 < \eta < b-l$, we will minimize for every $\eta$ as we did for every $\mu$ before. The same logic applies, and for every $(r,s)$ with $\eta <b-l$, there is a unique smallest $(r_\eta, s_\eta)\leq (r,s)$. Then  
\begin{equation*}\begin{aligned}
(r,s,h+\eta,h,bs+h) &\geq(r,s,1+\eta,1,bs+1)\\
&\geq (r_\eta,s_\eta,1,1+\eta,bs_\eta+1).\\
\end{aligned}
\end{equation*}

So for $\eta>0$, we have at most $1+b-l-1=b-l$ minimal positive elements, giving a total of $a-b+1+b-l=a-l+1$ at most minimal positive elements for $E_\Phi$.
\end{proof}

\begin{Exa}
Recall that in Example \ref{Ex1}, $a=5$ and $b=2$, so $a>b$. Then $5=2\cdot q+l$ gives $q=2$ and $l=1$, so the number of minimal positive elements of $E_\Phi$ is bounded above by $5-1+1=5$. This bound is not sharp, as we saw in \ref{Ex3} that the number of minimal positive elements is actually 4.
\end{Exa}

\section*{Acknowledgements}
The first author thanks Mel Hochster for suggestions to use Stanley's work on linear diophantine equations in the study of the intersection algebra for principal monomial ideals.  Many thanks to the anonymous referee and Sandra Spiroff for comments that improved the exposition. Some of the work in this paper was part of the doctoral dissertation of the second author completed at Georgia State University under direction of the first author.

\bibliography{References}

\end{document}